\pgfplotsset{compat=newest}
\numberwithin{equation}{section}
\newcommand\wwhat[1]{%
\savestack{\tmpbox}{\stretchto{%
  \scaleto{%
    \scalerel*[\widthof{\ensuremath{#1}}]{\kern-.6pt\bigwedge\kern-.6pt}%
    {\rule[-\textheight/2]{1ex}{\textheight}}
  }{\textheight}%
}{0.5ex}}%
\stackon[1pt]{#1}{\tmpbox}%
}
\setlist[enumerate]{leftmargin=1.5em}
\setlist[itemize]{leftmargin=1.5em}
\definecolor{green}{rgb}{0,0.8,0} 
\newtheorem{theorem}{Theorem}[section]
\newtheorem{corollary}[theorem]{Corollary}
\newtheorem{lemma}[theorem]{Lemma}
\newtheorem{proposition}[theorem]{Proposition}
\theoremstyle{definition}
\newtheorem{condition}[theorem]{Condition}
\theoremstyle{remark}
\newtheorem{remark}[theorem]{Remark}
\newtheorem*{Notation}{Notation and convention}
\numberwithin{equation}{section}
\newcommand{\RN}[1]{%
  \textup{\uppercase\expandafter{\romannumeral#1}}%
}
\newcommand{\nnrm}[1]{{\vert\kern-0.25ex\vert\kern-0.25ex\vert #1 
    \vert\kern-0.25ex\vert\kern-0.25ex\vert}}
\newcommand{\ud}{\mathrm{d}}
\newcommand{\dt}{\mathrm{d} t}
\newcommand{\bbP}{\mathbb P}
\newcommand{\bbR}{\mathbb R}
\newcommand{\calE}{\mathcal E}
\newenvironment{AMS}{}{}
\newenvironment{keywords}{}{}
\title{Convergence and non-convergence phenomena in Euler-Maxwell to MHD transitions
}
\author{Dong-ha Kim\thanks{Chung-Ang University. E-mail address: \url{kimdongha91@cau.ac.kr}}
	\and
	Junha Kim\thanks{Ajou University.  E-mail address: \url{junha02@ajou.ac.kr}
		}
	\and
	Jihoon Lee\thanks{
		Chung-Ang University. E-mail address: \url{jhleepde@cau.ac.kr}
	}
    }
\begin{document}
\maketitle

\begin{abstract}
In this work, we investigate the difference estimate for a class of  Euler-Maxwell system and those of magnetohydrodynamics (in short, MHD) systems in three dimensions. We decompose the Euler-Maxwell system into three parts, namely the MHD system, \textit{auxiliary linear system} and \textit{error part system}. As a result, we obtain the convergence of the velocity of the fluid $u$, electric fields $E$ and magnetic fields $B$ from the Euler-Maxwell system toward the MHD system in $L^p_tL^2_x$ as the speed of light $c$ approaches infinity for $p\in[1,\infty]$. We also derived non-convergence results of electric current $j$ or $cE$, and these results are classified by a certain threshold for $p$. Finally, we investigate how the $L^2$-energy flow of Euler-Maxwell system evolves as c tends to infinity, leading to the vanishing of Amp\`{e}re’s equation in the Euler-Maxwell system.
\end{abstract}

\begin{keywords}
	\textbf{Keywords:} Euler-Maxwell, Magnetohydrodynamics, Difference estimate, Convergence and non-convergence of solutions.
\end{keywords}

\begin{AMS}
	\textbf{Mathematics Subject Classification:} 35Q35, 35Q60, 76D03, 76W05, 78A25.
\end{AMS}

\tableofcontents
\allowdisplaybreaks 

%
\section{Introduction}
%
\subsection{Motivation of the work}
We consider incompressible Euler-Maxwell system with Ohm's law in three-dimensional space:
\begin{equation}\label{main_eqn}
\begin{cases}
 \begin{aligned}
	\text{(Euler equation)}\quad &	\partial_t u + (u \cdot \nabla)u + \nabla p = j \times B, \quad &\operatorname{div} u =0,\\
	\text{(Amp\`{e}re’s equation)}\quad &	\frac {1}{c} \partial_t E - \nabla \times B = -j,  \quad & \operatorname{div} E = 0,\\
	\text{(Faraday’s equation)}\quad &	\frac{1}{c}\partial_t B + \nabla \times E = 0, \qquad & \operatorname{div} B =  0,\\
	\text{(Ohm's law)}\quad &	j=\sigma(cE + \mathbb{P}(u \times B)), \qquad & \operatorname{div} j = 0.
	\end{aligned}
\end{cases}	
\end{equation}
In the above system, $t \in {\mathbb{R}}_{+}$ and $x\in {\mathbb{R}}^3$ denote time and space variables and the operator $\mathbb{P}=I+\nabla(-\Delta)^{-1} \operatorname{div}$ denotes Leray projection. The field $u=(u_1,u_2,u_3)=u(t,x)$ denotes the velocity of the fluid, while $E=(E_1,E_2,E_3)=E(t,x)$ and $B=(B_1,B_2,B_3)=B(t,x)$ stand for the electric fields and magnetic fields, respectively. Moreover, 
$p=p(t,x)$ denotes the scalar pressure which is also an unknown. Additionally, $j$ denotes the electric current which is not an unknown since it is completely determined by $u,E$ and $B$ by Ohm's law. Finally, the positive constants $c \geq c_0$ and $\sigma>0$ represent the speed of light and the electrical conductivity, for some fixed constant $c_0>0$. Since we are not interested in the inviscid limit of $\sigma$, we fix $\sigma=1$ in this paper.

To ensure compatibility of the initial data, we assume the divergence free condition $\operatorname{div}u_0 = \operatorname{div}E_0=\operatorname{div}B_0=0.$ One can observe that the divergence-free condition $\operatorname{div}E=\operatorname{div}B=\operatorname{div}j=0$ is preserved over time by taking the divergence of Maxwell's equations in \eqref{main_eqn} which consist of Amp\`{e}re's and Faraday's equations. Moreover, the Euler-Maxwell system \eqref{main_eqn} satisfies the formal energy conservations in the following form:
\begin{equation}\label{energy-cons2}
\frac12 \frac{\ud}{\ud t}(\| u \|_{L^2}^2 +\| E\|_{L^2}^2+\| B \|_{L^2}^2 )+ \| j \|_{L^2}^2 =0.
\end{equation}

The Euler-Maxwell system \eqref{main_eqn} describes the dynamics of a charged fluid, known as plasma, subject to Lorentz force $j\times B$. A plasma is often referred to the fourth state of matter, which describes a wide variety of macroscopically neutral substances containing many interacting free electrons and ionized atoms. 
Fluids like plasma are known to possess the properties of perfect conductors; hence, it may be physically assumed that the term $\frac{1}{c} \partial_t E$ vanishes. Consequently, \eqref{main_eqn} can be formally reduced to the usual incompressible magnetohydrodynamic (in short, MHD) system:
\begin{equation} \label{eq-MHD}
	\left\{
	\begin{aligned}
		&\partial_t u +(u \cdot \nabla)u +\nabla p = (\nabla \times B)\times B,\qquad & \operatorname{div} u =0,\\
		& \partial_t B -\Delta B =\nabla \times (u \times B), \qquad &\operatorname{div} B =0,
	\end{aligned}	
	\right.
\end{equation}
which describes the behavior of a conducting inviscid fluid or a plasma in the presence of a magnetic field. There have been extensive mathematical studies on a rigorous treatment to MHD system. 
For these, we refer to references \cite{duvaut1972inequations,sermange1983some,kozono1989weak}.

In the last decade, the Navier-Stokes-Maxwell system (i.e., Euler-Maxwell \eqref{main_eqn} with a dissipation term of $u$) has attracted a lot of attention. 
The global existence of two-dimensional solutions has been studied in \cite{masmoudi2010global,ibrahim2011global,germain2014well}, and the uniform bound of solutions was established in  \cite{arsenio2020solutions}.
In the three-dimensional case, \cite{arsenio2020solutions} obtained the global existence of weak solutions and \cite{arsenio2024axisymmetric} proved the global existence of axisymmetric solutions with strong singular limit result toward MHD system.
See also other related works \cite{arsenio2024global,arsenio2019vlasov,ibrahim2018time,ibrahim2012local,10.21099/tkbjm/1496160397,10.3792/pjaa.62.181,nunez2005existence}.

Compared to the extensive studies on the Navier-Stokes-Maxwell system, relatively little research has been done on the incompressible Euler-Maxwell system connecting MHD system. We refer to recent results \cite{arsenio2022damped} and \cite{arsenio2024stability} for the first global regularity results on solutions to the Euler-Maxwell system with two-dimensional normal structure.
The authors in \cite{arsenio2022damped} and \cite{arsenio2024stability} adopt Yudovich's approach to construct a unique global solution of the Euler-Maxwell system, where the electromagnetic field has Sobolev regularity $H^s(\mathbb{R}^2)$ for $s \in (\frac{7}{4},2)$. Moreover, they show that an appropriate norms of solutions are uniformly bounded with respect to $c$ and established the strong convergence of the solutions. To the best of our knowledge, in the literature there has been no results on general three-dimensional Euler-Maxwell system. This paper focuses on the asymptotic limit for the smooth solutions to the three-dimensional Euler-Maxwell system \eqref{main_eqn} as $c \to \infty$.

Before stating the main results, we first review some key observations.
Let $(u^c,B^c,E^c)$ and $(\overline u,\overline B)$ satisfy the Euler-Maxwell system and those of the MHD system, respectively, with $(u^c,E^c,B^c)\vert_{t=0}=(u_0,E_0,B_0)$ and $(\overline u,\overline B)\vert_{t=0}=(u_0,B_0)$. Formally, as $c \to \infty$, one may speculate the solutions to the Euler-Maxwell system converge \textit{in certain sense} as follows:
\begin{equation}\label{eq1657wed}
\begin{aligned}
(u^c,E^c,B^c) &\longrightarrow (\overline u,0,\overline B), \\
j^c &\longrightarrow \nabla \times \overline  B, \\
cE^c &\longrightarrow \nabla \times \overline  B - \mathbb{P}(\overline u\times \overline B). \\
\end{aligned}
\end{equation}
However, such convergences may not be strong enough to guarantee the exact convergence of energy equality: Given that the initial data for the Euler-Maxwell system is $(u_0,E_0,B_0)$, we set the initial data for the MHD system as $(u_0,B_0)$.
On the other hands, the energy conservation law from \eqref{energy-cons2} yields that
\begin{equation}
\label{eng_1}\begin{gathered}
        \frac{1}{2} \left(\| u^c(t) \|_{L^2}^2 + \| E^c(t) \|_{L^2}^2 + \| B^c(t) \|_{L^2}^2\right) + \int_0^t\|j^c(\tau)\|_{L^2}^2 \,\ud \tau \\ = \frac{1}{2} \left(\| u_0 \|_{L^2}^2 + \| E_0 \|_{L^2}^2 + \| B_0 \|_{L^2}^2\right)
        \end{gathered}
    \end{equation} for all $c \geq c_0$.
The energy conservation of MHD system (see \eqref{L2_est_MHD}) gives \begin{equation}\label{eng_2}
\begin{gathered}
        \frac{1}{2} \left(\| \overline{u}(t) \|_{L^2}^2 + \| \overline{B}(t) \|_{L^2}^2\right) +\int_0^t\|\nabla \times \overline{B}(\tau)\|_{L^2}^2 \,\ud \tau = \frac{1}{2} \left(\| u_0 \|_{L^2}^2 + \| B_0 \|_{L^2}^2\right).
\end{gathered}
    \end{equation}
We combine \eqref{eng_1}-\eqref{eng_2} to see 
\begin{equation}\label{energyjump0305}
\begin{gathered}
    \frac{1}{2} \left(\| u^c(t) \|_{L^2}^2 + \| E^c(t) \|_{L^2}^2 + \| B^c(t) \|_{L^2}^2\right) + \int_0^t\|j^c(\tau)\|_{L^2}^2 \,\ud \tau \\= \frac{1}{2} \left(\| \overline{u}(t) \|_{L^2}^2 + \| \overline{B}(t) \|_{L^2}^2\right) + \int_0^t\|\nabla \times \overline{B}(\tau)\|_{L^2}^2 \,\ud \tau + \frac{1}{2}\| E_0 \|_{L^2}^2
\end{gathered}
\end{equation}
    
We assume that the solutions $(u^c,B^c,E^c)$ to the Euler-Maxwell systems satisfy the convergences \eqref{eq1657wed} in a sufficiently strong sense, such that
\begin{equation}\label{eq1658wed}
\begin{gathered}
\left(\|u^c(t)\|_{L^2},\|E^c(t)\|_{L^2},\|B^c(t)\|_{L^2}\right) \rightarrow \left(\|\overline{u}(t)\|_{L^2},0,\|\overline{B}(t)\|_{L^2}\right), \\
 \int_0^t\|j^c(\tau)\|_{L^2}^2 \,\ud \tau \rightarrow \int_0^t\|\nabla \times \overline{B}(\tau)\|_{L^2}^2 \,\ud \tau,
\end{gathered}
\end{equation}
for $t >0$. However, \eqref{energyjump0305} and \eqref{eq1658wed}  lead to
\begin{equation*}
    \lim_{c\to\infty}\|E_0\|_{L^2}=0,
\end{equation*}
which is not true in general.

This observation naturally suggests that, assuming $\|E_0\|_{L^2} \not = 0$, the solutions to the Euler-Maxwell system may not necessarily converge in certain topologies, raising the question of which topologies allow for convergence and which do not.
Furthermore, we formally have shown, using a proof by contradiction, that the total convergence \eqref{eq1658wed} cannot occur. To gain a constructive understanding, one may ask how the energy difference between the Euler-Maxwell system and the MHD system is redistributed as $c$ tends to infinity. See Corollary~\ref{cor_energyjump} for the details.

We summarize the main contributions of our results.
\begin{itemize}
    \item \textbf{Convergence of the solutions.} We establish the convergence of the solution $(u^c,E^c,B^c)$ to the Euler-Maxwell system in $L^p_{t,\text{loc}}L^2_x$ for $p \in [1,\infty]$. We also obtain the convergence rate.
    \item \textbf{Thresholds for non-convergence in $L^p_tL^2_x$.} Additionally, we precisely identify the thresholds for the non-convergence of $j^c$ and $cE^c$ in $L^p_{t,\text{loc}}L^2_x$, varying $p \in [1,\infty]$. We not only find the rate of convergence, but also compute the blow-up rate in case where $j^c$ and $cE^c$ fail to  converge.
    \item \textbf{Flow of the initial $L^2$-energy.} We investigate  where the initial $L^2$-energy of $E_0^c$ flows. Specifically, as $c\to \infty$, this energy flows instantaneously into the $L^2(0,t;L^2)$-norm of $j^c$. Also, the $L^2$-norm of $E^c(t)$ converges to zero for each time $t>0$.
\end{itemize}

\subsection{Main results}

In this paper, we shall study the Cauchy problem of \eqref{main_eqn}. We begin by introducing our initial data setup.
\begin{condition}[Initial data condition]\label{cond_initial} A positive integer $m$ satisfies $m>5/2$. For each $c \geq c_0$, we denote initial data $(u^c,E^c,B^c)\vert_{t=0} =(u^c_0,E^c_0,B^c_0) \in H^m(\mathbb{R}^3)$ with $\operatorname{div}u_0^c = \operatorname{div}E_0^c=\operatorname{div}B_0^c=0$. We assume that initial data $(u^c_0,B^c_0)$ converges in $L^2$ strong topology, as $c \to \infty$, towards some $(u_0, B_0) \in H^m(\mathbb{R}^3)$.
For the initial data $E^c_0$, we assume the following uniform boundedness
\begin{equation*}
    \sup_{c\geq c_0}\|E^c_0\|_{H^1}<+\infty.
\end{equation*}
\end{condition}

Before stating our main theorem, we remark the following: our main theorem proceeds by assuming the existence of strong solutions  \eqref{main_eqn} and \eqref{eq-MHD} on arbitrary given time interval $[0,T]$, then estimating the difference between the two solutions. In section \ref{sec2}, we will justify this assumption  by demonstrating the local-in-time existence of the Euler-Maxwell system \eqref{main_eqn} for some small $T^*>0$ with uniform estimate in $L^{\infty}([0,T^*];H^m(\mathbb{R}^3))$ (with respect to $c\geq c_0$).
We use the following notation throughout this work:
\begin{equation}\label{def_Ebar}
    \overline{E}:=\bbP(\nabla \times \overline{B}-\overline{u} \times \overline{B}),
\end{equation}
and
\begin{equation*}
   \overline{j}:= \nabla \times \overline{B}.
\end{equation*}
Our main theorem may now be stated as follows.
\begin{theorem} [Difference estimates] \label{thm_conv}
We assume that there exists $T>0$ satisfying the following: For each $c \geq c_0$, let $(u^c,B^c,E^c) \in C([0,T];H^m(\bbR^3))$ be the solution of Euler-Maxwell system \eqref{main_eqn} satisfying Condition \ref{cond_initial}.
By picking $(u_0,B_0)$ up in Condition \ref{cond_initial}, let $(\overline{u},\overline{B}) \in C([0,T];H^m(\bbR^3))$ be the solution of the MHD system \eqref{eq-MHD} subject to initial data $(u_0, B_0)$.
We also assume that there exist two constants $M>0$ and $\overline{M}>0$ which do not depend on $c \geq c_0$, satisfying the following upper bounds
\begin{equation}\label{ass_eng_EM}
    \begin{aligned}
		\int_0^T \| B^c(t) \|_{L^{\infty}}^2 \,\ud t + \int_0^T \| j^c(t) \|_{L^{\infty}} \,\ud t \leq M,
		\end{aligned}
	\end{equation}
and
\begin{equation}\label{ass_eng_MHD2016}
		\begin{aligned}
			\sup_{t \in [0,T]} \left( \|\overline{u}(t) \|_{H^m}  + \| \overline{B}(t) \|_{H^m} \right)  + \left(\int_0^{T} \|\nabla \overline{B}(t)\|_{H^m}^2 \,\ud t\right)^{\frac{1}{2}} \leq \overline{M}.
		\end{aligned}
	\end{equation}
Below, recalling $\overline{E}$ in \eqref{def_Ebar}, we set
\begin{equation}\label{Eczero_0214}
    \calE^c_0 := \|u^c_0-u_0\|_{L^2}+\|B^c_0-B_0\|_{L^2} + \frac{1}{c^2} \left(\| cE^c_0 - \overline{E}(0) \|_{H^1} +1\right).
\end{equation}  
Then, for any $p \in [1,+\infty]$, there exists a positive constant $C$\footnote{The constant $C$ depends only on $T$, $M$, and $\overline{M}$.} such that
\begin{equation}\label{c_rate1}
	\begin{aligned}
		\| (u^c,\, B^c) - (\overline{u},\,\overline{B}) \|_{L^{p}(0,T ; L^2(\bbR^3))} \leq C \, \calE^c_0,
	\end{aligned}
\end{equation}
and
\begin{equation}\label{c_rate2}
		\| cE^c - (\overline{E} + e^{-c^2t} (cE^c_0-\overline{E}(0))) \|_{L^p(0,T;L^2)} \leq C (1+ c^{1-\frac{2}{p}})\,\calE^c_0
\end{equation}
for all $c \geq c_0$. If we further assume that 
\begin{equation}\label{ass_eng_EM_2}
    \|B^c(t)\|_{L^{p}(0,T ; L^{\infty}(\bbR^3))}\leq M,
\end{equation}
then 
\begin{equation}\label{c_rate3}
		\| \,j^c - (\overline{j} + e^{-c^2t} (cE^c_0-\overline{E}(0))) \|_{L^p(0,T;L^2)} \leq C (1+c^{1-\frac{2}{p}})\,\calE^c_0
\end{equation}
holds for all $c \geq c_0$. \end{theorem}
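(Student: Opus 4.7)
The plan is to isolate the stiff relaxation that carries the initial-data mismatch of $cE$. Setting
\begin{equation*}
\phi(t,x):=e^{-c^2 t}\bigl(cE^c_0(x)-\overline E(0,x)\bigr),\qquad \partial_t\phi+c^2\phi=0,
\end{equation*}
I would introduce the error variables $\tilde u:=u^c-\overline u$, $\tilde B:=B^c-\overline B$, and $\tilde E:=cE^c-\overline E-\phi$, so that $\tilde E|_{t=0}=0$ and $\|(\tilde u,\tilde B)|_{t=0}\|_{L^2}\le\calE^c_0$. Using $\overline j=\nabla\times\overline B=\overline E+\mathbb{P}(\overline u\times\overline B)$ and $j^c-\overline j=\tilde E+\phi+\mathbb{P}(\tilde u\times B^c+\overline u\times\tilde B)$, subtracting the MHD system and the ODE for $\phi$ from \eqref{main_eqn} yields
\begin{equation*}
\begin{aligned}
\partial_t\tilde u+(u^c\!\cdot\!\nabla)\tilde u+(\tilde u\!\cdot\!\nabla)\overline u+\nabla\tilde p&=(j^c-\overline j)\times B^c+\overline j\times\tilde B,\\
\partial_t\tilde B+\nabla\times\tilde E&=-\nabla\times\phi,\\
\partial_t\tilde E+c^2\tilde E&=c^2\mathbb{P}\bigl(\nabla\times\tilde B-\tilde u\times B^c-\overline u\times\tilde B\bigr)-\partial_t\overline E.
\end{aligned}
\end{equation*}

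\textbf{Combined energy estimate.} I would pair these three equations with $\tilde u$, $\tilde B$, and $c^{-2}\tilde E$ and sum, imitating the conserved energy \eqref{energy-cons2}. Two algebraic identities drive the estimate: integration by parts produces the cancellation $\langle\nabla\times\tilde E,\tilde B\rangle=\langle\nabla\times\tilde B,\tilde E\rangle$ between the $\tilde B$ and $\tilde E$ equations, while the triple-product identity $\langle\tilde u\times B^c,\tilde E\rangle=-\langle\tilde E\times B^c,\tilde u\rangle$ merges the two $\tilde E$--$\tilde u$ cross-terms into $2\langle\tilde E\times B^c,\tilde u\rangle$, half of which Young's inequality absorbs into the dissipation $\|\tilde E\|^2$. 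The outcome takes the form
\begin{equation*}
\tfrac{d}{dt}\Bigl(\|\tilde u\|^2+\|\tilde B\|^2+\tfrac{1}{c^2}\|\tilde E\|^2\Bigr)+\|\tilde E\|^2\le \calN(t)+\calR_\phi(t)+\tfrac{1}{c^2}\|\partial_t\overline E\|_{L^2}\|\tilde E\|_{L^2},
\end{equation*}
where $\calN$ collects bilinear terms in the errors and background fields (controlled via \eqref{ass_eng_EM}--\eqref{ass_eng_MHD2016}) and $\calR_\phi$ gathers the $\phi$--forcings $\langle\phi\times B^c,\tilde u\rangle$ and $-\langle\nabla\times\phi,\tilde B\rangle$.

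\textbf{The hard part} is controlling $\calR_\phi$: a direct Young bound fails, because $\|\phi(0)\|_{L^2}=\|cE^c_0-\overline E(0)\|_{L^2}$ is of order $c$, whereas the target rate requires the factor $c^{-2}$ buried in $\calE^c_0$. To extract it, I would exploit the ODE relation $\phi=-c^{-2}\partial_t\phi$ and integrate by parts in time, for instance
\begin{equation*}
\int_0^t\langle\phi\times B^c,\tilde u\rangle\,ds=-\tfrac{1}{c^2}\bigl[\langle\phi,B^c\times\tilde u\rangle\bigr]_0^t+\tfrac{1}{c^2}\int_0^t\langle\phi,\partial_t(B^c\times\tilde u)\rangle\,ds,
\end{equation*}
and analogously for $\int_0^t\langle\nabla\times\phi,\tilde B\rangle\,ds$; both identities now carry the required $c^{-2}$ prefactor. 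Substituting $\partial_tB^c=-\nabla\times(cE^c)$, the $\tilde u$ equation for $\partial_t\tilde u$, and $\partial_t\tilde B=-\nabla\times(\tilde E+\phi)$, the remaining expressions are either products small in $\calE^c_0$ or can be absorbed into the $\|\tilde E\|^2$ dissipation. Combined with the bounds $\|\phi\|_{L^1_tL^2_x}\le c^{-2}\|cE^c_0-\overline E(0)\|_{L^2}\le\calE^c_0$ and $\|\nabla\phi\|_{L^1_tL^2_x}\le c^{-2}\|cE^c_0-\overline E(0)\|_{H^1}\le\calE^c_0$, together with the uniform-in-$c$ $H^m$ bounds from Section~\ref{sec2}, this gives $|\int_0^t\calR_\phi|\lesssim(\calE^c_0)^2+\tfrac12\int_0^t\|\tilde E\|_{L^2}^2\,ds$. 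Gr\"onwall's inequality then produces
\begin{equation*}
\sup_{[0,T]}\Bigl(\|\tilde u\|^2+\|\tilde B\|^2+\tfrac{1}{c^2}\|\tilde E\|^2\Bigr)+\int_0^T\|\tilde E(s)\|_{L^2}^2\,ds\le C(T,M,\overline M)(\calE^c_0)^2.
\end{equation*}

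\textbf{$L^p$ conclusions.} The $L^\infty_tL^2_x$ control of $(\tilde u,\tilde B)$ gives \eqref{c_rate1} for every $p\in[1,\infty]$ via the H\"older factor $T^{1/p}$. The extremal bounds $\|\tilde E\|_{L^\infty_tL^2_x}\le Cc\,\calE^c_0$ and $\|\tilde E\|_{L^2_tL^2_x}\le C\calE^c_0$ interpolate to $\|\tilde E\|_{L^p_tL^2_x}\le Cc^{1-2/p}\calE^c_0$ for $p\in[2,\infty]$, with H\"older in time handling $p\in[1,2]$, yielding \eqref{c_rate2}. For \eqref{c_rate3}, writing $j^c-\overline j-\phi=\tilde E+\mathbb{P}(\tilde u\times B^c+\overline u\times\tilde B)$, the additional hypothesis \eqref{ass_eng_EM_2} provides $\|\tilde u\times B^c\|_{L^p_tL^2_x}\le\|\tilde u\|_{L^\infty_tL^2_x}\|B^c\|_{L^p_tL^\infty_x}\le CM\calE^c_0$, while $\|\overline u\times\tilde B\|_{L^p_tL^2_x}\lesssim\overline M\calE^c_0$ follows from \eqref{ass_eng_MHD2016}.
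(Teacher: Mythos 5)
Your decomposition is sound (the error system you write down is correct, $\widetilde E|_{t=0}=0$, and the $L^p$ endgame by interpolation between the $L^\infty_tL^2_x$ and $L^2_tL^2_x$ bounds of $\widetilde E$ is exactly how one should conclude), but the central step — the bound $|\int_0^t\mathcal{R}_\phi|\lesssim(\calE^c_0)^2+\tfrac12\int_0^t\|\widetilde E\|_{L^2}^2$ — is not justified under the hypotheses of Theorem~\ref{thm_conv}, and as sketched it would fail. Your time integration by parts of $\int_0^t\langle\phi\times B^c,\widetilde u\rangle\,ds$ produces a boundary term $c^{-2}\langle\phi(t),B^c(t)\times\widetilde u(t)\rangle$, which needs a pointwise-in-time bound on $\|B^c(t)\|_{L^\infty_x}$, and a bulk term into which you substitute $\partial_tB^c=-\nabla\times(cE^c)$ and the $\widetilde u$-equation (including $\nabla\widetilde p$); estimating these requires control of $\|\nabla E^c\|_{L^2}$, $\|\widetilde u\|_{L^\infty_x}$ or $\|\phi\|_{L^\infty_x}$, and the pressure, none of which follow from \eqref{ass_eng_EM}--\eqref{ass_eng_MHD2016}. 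The theorem is stated with a constant depending only on $T,M,\overline M$ on an arbitrary interval where \eqref{ass_eng_EM} holds, and the paper explicitly warns that \eqref{ass_eng_EM}, \eqref{ass_eng_EM_2} do \emph{not} imply uniform bounds such as $\|u^c\|_{L^\infty_tH^m_x}$; the uniform $H^m$ bounds of Section~\ref{sec2} are only available on the (possibly much shorter) local-existence interval, so invoking them does not prove the statement as formulated. The analogous integration by parts for $\int\langle\nabla\times\phi,\widetilde B\rangle$ is also problematic (it would require two derivatives of $\phi$, i.e.\ more than the assumed $H^1$ control of $cE^c_0-\overline E(0)$, or $\|\nabla\widetilde E\|_{L^2}$), whereas there the direct bound $\|\nabla\phi\|_{L^1_tL^2_x}\le c^{-2}\|cE^c_0-\overline E(0)\|_{H^1}\le\calE^c_0$ already suffices.

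The difficulty is in fact self-inflicted by your choice of splitting the Lorentz-force difference as $(j^c-\overline j)\times B^c+\overline j\times\widetilde B$: this pairs the boundary layer $\phi$ with $B^c$, which is only in $L^2_tL^\infty_x$, so Hölder in time yields only $\|e^{-c^2t}\|_{L^2_t}\sim c^{-1}$ and loses a full power of $c$. The paper splits the other way, $j^c\times(B^c-\overline B)+(j^c-\overline j)\times\overline B$, so the term carrying $\phi$ is always multiplied by $\|\overline B\|_{L^\infty}\lesssim\overline M$, and then the elementary bounds $\|\phi\|_{L^1_tL^2_x}+\|\nabla\phi\|_{L^1_tL^2_x}\le c^{-2}\|cE^c_0-\overline E(0)\|_{H^1}\le\calE^c_0$ close the Gr\"onwall argument (after augmenting the energy functional by $c^{-4}(\|cE^c_0-\overline E(0)\|_{L^2}^2+1)$ to absorb the $c^{-2}\partial_t\overline E$ forcing via Lemma~\ref{lem_useful}), with no time integration by parts and using only \eqref{ass_eng_EM}--\eqref{ass_eng_MHD2016}. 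Structurally, the paper also packages the relaxation layer differently: instead of subtracting the explicit $\phi$, it introduces the auxiliary linear system \eqref{linear_H} with data $E^c_0$ and forcing built from $\overline E$, proves parabolic-type estimates ($\|B_L\|_{L^\infty_tL^2}+\|\nabla B_L\|_{L^2_tL^2}\lesssim c^{-2}(\|cE^c_0-\overline E(0)\|_{H^1}+1)$, and a Duhamel bound giving \eqref{est_EL}), and then runs the $L^2$ error estimate with zero initial electric error. Your $\phi$-subtraction could be made to work as an alternative, but only after adopting the paper's pairing; as written, the key estimate has a genuine gap.
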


\begin{remark}
The estimate \eqref{c_rate3} remains valid for $p \in [1,2]$, without assuming \eqref{ass_eng_EM_2} but only \eqref{ass_eng_EM}.
\end{remark}
Theorem~\ref{thm_conv} naturally yields various points, such as the convergence or non-convergence of solutions. The convergence of $(u^c,B^c) \to (\overline{u},\overline{B})$ and $E^c \to 0$ are deduced from \eqref{c_rate1} and \eqref{c_rate2}, respectively. 
On the other hand, from \eqref{c_rate2}, one can expect that the convergence or non-convergence of $(cE^c,j^c)$ largely depends on the value of $\|e^{-c^2t}\|_{L^p_t}\|cE^c_0-\overline{E}(0)\|_{L^2_x}$.
We now state this in Corollary \ref{cor_conv_uB}-\ref{cor_conv_nonconv}.

\begin{corollary}[Convergence of solutions $u^c,B^c$ and $E^c$]\label{cor_conv_uB}
Assume all assumptions in Theorem~\ref{thm_conv}. Namely, $(u^c,B^c,E^c)$ is a solution of Euler-Maxwell system \eqref{main_eqn}, and $(\overline{u},\overline{B})$ is a solution of MHD system \eqref{eq-MHD} with the initial data referred in Theorem~\ref{thm_conv}. Then, $(u^c,B^c)$ converges to $(\overline{u},\,\overline{B})$ in $L^{\infty}(0,T;L^2(\mathbb{R}^3))$.i.e. 
\begin{equation}\label{limit_ub}
    \lim_{c \to \infty}\|(u^c -\overline{u}, B^c - \overline{B})\|_{L^{\infty}(0,T;L^2(\mathbb{R}^3))}=0.
\end{equation}
Moreover, the electric field $E^c$ converges to zero in $L^p(0,T;L^2(\mathbb{R}^3))$ for every $p\in[1,\infty)$. In other words,
\begin{equation}\label{limit_E}
    \lim_{c \to \infty}\|E^c\|_{L^{p}(0,T;L^2(\mathbb{R}^3))}=0 \quad \text{for every } p \in [1,\infty).
\end{equation}
For $p=\infty$, the electric field $E^c$ converges to zero in $L^{\infty}(0,T;L^2(\mathbb{R}^3))$ if $\|E^c_0\|_{L^2(\mathbb{R}^3)}$ converges to zero as $c \to \infty$. Conversely, if $\|E^c_0\|_{L^2(\mathbb{R}^3)}$ does not tend to zero, then $E^c$ fails to converges to any limit in $L^{\infty}(0,T;L^2(\mathbb{R}^3))$.
\end{corollary}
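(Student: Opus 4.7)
The plan is to reduce the entire corollary to Theorem~\ref{thm_conv} by carefully tracking the size of the initial quantity $\calE^c_0$ and of the auxiliary exponential factor $e^{-c^2 t}$ in various $L^p_t$ norms. The first observation is that under Condition~\ref{cond_initial} the quantity $\calE^c_0$ defined in \eqref{Eczero_0214} tends to zero: the first two terms go to zero by the assumed $L^2$-strong convergence of $(u^c_0,B^c_0)$, while $\frac{1}{c^2}\|cE^c_0-\overline{E}(0)\|_{H^1}\leq\frac{1}{c}\sup_c\|E^c_0\|_{H^1}+\frac{1}{c^2}\|\overline{E}(0)\|_{H^1}\to 0$ using the uniform $H^1$ bound. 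Taking $p=\infty$ in the difference estimate \eqref{c_rate1} then gives \eqref{limit_ub} immediately.

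For \eqref{limit_E} with $p\in[1,\infty)$, I divide \eqref{c_rate2} by $c$ to isolate $E^c$. Writing $\tfrac{1}{c}e^{-c^2 t}(cE^c_0-\overline{E}(0))=e^{-c^2 t}E^c_0-\tfrac{1}{c}e^{-c^2 t}\overline{E}(0)$ and applying the triangle inequality yields
\begin{equation*}
    \|E^c-e^{-c^2 t}E^c_0\|_{L^p(0,T;L^2)}\leq \frac{C(1+c^{1-2/p})}{c}\calE^c_0+\frac{1}{c}\|\overline{E}-e^{-c^2 t}\overline{E}(0)\|_{L^p(0,T;L^2)}.
\end{equation*}
Since $\frac{1+c^{1-2/p}}{c}=\frac{1}{c}+c^{-2/p}$ is bounded and $\calE^c_0\to 0$, the right-hand side vanishes in the limit. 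A direct computation gives $\|e^{-c^2 t}E^c_0\|_{L^p(0,T;L^2)}\leq \|E^c_0\|_{L^2}(pc^2)^{-1/p}\to 0$ for $p<\infty$, and combining these two estimates through one more triangle inequality produces \eqref{limit_E}.

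The only nontrivial part, and the place where the statement becomes sharp, is the case $p=\infty$. The same manipulation shows $\|E^c-e^{-c^2 t}E^c_0\|_{L^\infty(0,T;L^2)}\to 0$ since $\frac{1+c}{c}\calE^c_0\to 0$ and $\frac{1}{c}\|\overline{E}-e^{-c^2 t}\overline{E}(0)\|_{L^\infty(0,T;L^2)}\to 0$. If $\|E^c_0\|_{L^2}\to 0$, then $\|e^{-c^2 t}E^c_0\|_{L^\infty(0,T;L^2)}=\|E^c_0\|_{L^2}\to 0$ and the conclusion follows. The delicate direction is the converse: supposing, for contradiction, that $E^c\to F$ in $L^\infty(0,T;L^2)$ for some $F$, the previous estimate forces $e^{-c^2 t}E^c_0\to F$ in $L^\infty(0,T;L^2)$. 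For every fixed $t>0$ the quantity $e^{-c^2 t}E^c_0$ tends to zero in $L^2$ (since $E^c_0$ is $L^2$-bounded), which identifies $F=0$ almost everywhere, i.e.\ $F\equiv 0$ in $L^\infty(0,T;L^2)$. However $\|e^{-c^2 t}E^c_0\|_{L^\infty(0,T;L^2)}=\|E^c_0\|_{L^2}$, the essential supremum being attained at $t=0$, and by assumption this does not vanish along the sequence; this contradicts the convergence to $F=0$.

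The main obstacle I anticipate is precisely this $p=\infty$ argument, because one must be careful to distinguish pointwise-in-$t$ convergence (which kills $e^{-c^2 t}E^c_0$ for each $t>0$) from the $L^\infty_t$-norm (which stays as large as $\|E^c_0\|_{L^2}$ due to the initial time). The rest of the proof is a systematic bookkeeping exercise on top of Theorem~\ref{thm_conv}.
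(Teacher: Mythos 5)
Your proposal is correct and takes essentially the same route as the paper: both reduce the corollary to Theorem~\ref{thm_conv} via $\calE^c_0 \to 0$, divide \eqref{c_rate2} by $c$, and for $p=\infty$ exploit that the exponential corrector carries the full mass $\|E^c_0\|_{L^2}$ near $t=0$. The only cosmetic difference is in the non-convergence direction: the paper uses the lower bound $\|E^c_0\|_{L^2}\le\|E^c\|_{L^{\infty}(0,T;L^2)}$ (from $E^c(0)=E^c_0$) together with the already-established $L^1_tL^2_x$ convergence to identify zero as the only possible limit, whereas you place the lower bound on the profile $e^{-c^2t}E^c_0$ and identify the limit through its pointwise-in-time decay; both arguments are valid.
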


\begin{corollary}[Convergence and Non-convergence of $cE^c$ and $j^c$]\label{cor_conv_nonconv} Assume all assumptions in Theorem~\ref{thm_conv}. 
Then, we can classify the convergence or non-convergence of $\{cE^c\}_{c\geq c_0}$ and $\{j^c\}_{c\geq c_0}$ in $L^p(0,T;L^2(\mathbb{R}^3))$ as $p$ varies, as follows:
\begin{enumerate}[label=(\roman*)]
\item $1 \leq p < 2$: It holds that
\begin{equation*}
    \lim_{c \to \infty}\|(cE^c -\overline{E}, j^c - \overline{j})\|_{L^{p}(0,T;L^2(\mathbb{R}^3))}=0 \quad \text{for every } p \in [1,2).
\end{equation*}
\item $p=2$: It holds that
\begin{equation*}
 \lim_{c\to \infty}\|E^c_0\|_{L^2(\mathbb{R}^3)}=0 \quad \text{implies} \quad\lim_{c \to \infty}\|(cE^c -\overline{E}, j^c - \overline{j})\|_{L^{2}(0,T;L^2(\mathbb{R}^3))}=0.
\end{equation*}
Conversely, if $\|E^c_0\|_{L^2(\mathbb{R}^3)}$ does not tend to zero, then neither $\{cE^c\}_{c\geq c_0}$ nor $\{j^c\}_{c\geq c_0}$ converges to any limit in $L^{2}(0,T;L^2(\mathbb{R}^3))$ as $c \to \infty$.
\item $2<p<\infty$: Suppose that initial data decays in the sense that
\begin{equation}\label{eq0131mon}
    \lim_{c\to \infty} c^{1-\frac{2}{p}}\left(\|E^c_0\|_{L^2}+\|u^c_0-u_0\|_{L^2}+\|B^c_0-B_0\|_{L^2}\right)=0.
\end{equation}
Then, $(cE^c,j^c)$ converges to $(\overline{E}, \overline{j})$ in $L^p(0,T;L^2(\mathbb{R}^3))$.

On the other hands, assume that the decay rate has the lower bound in the sense that
\begin{equation*}
    \limsup_{c\to \infty} c^{1-\frac{2}{p}}\left(\|E^c_0\|_{L^2} -\|u^c_0-u_0\|_{L^2}-\|B^c_0-B_0\|_{L^2}\right) >0.
\end{equation*}
Then, neither $\{cE^c\}_{c\geq c_0}$ nor $\{j^c\}_{c\geq c_0}$ converges to any limit in $L^p(0,T;L^2(\mathbb{R}^3))$ as $c \to \infty$.
\end{enumerate}
\end{corollary}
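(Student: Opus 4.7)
The plan is to read off the classification directly from Theorem~\ref{thm_conv} by analyzing the size of the explicit remainder $e^{-c^2 t}(cE_0^c-\overline E(0))$ in $L^p(0,T;L^2)$. The key input is
\begin{equation*}
    \|e^{-c^2 t}\|_{L^p(0,T)}=\left(\frac{1-e^{-pc^2T}}{pc^2}\right)^{1/p}\sim p^{-1/p}c^{-2/p}\qquad(p<\infty),
\end{equation*}
combined with the reduction $\calE_0^c=\|u_0^c-u_0\|_{L^2}+\|B_0^c-B_0\|_{L^2}+O(c^{-1})$, which follows from Condition~\ref{cond_initial} and $\sup_c\|E_0^c\|_{H^1}<\infty$.

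For the convergence statements, the triangle inequality applied to \eqref{c_rate2} gives
\begin{equation*}
    \|cE^c-\overline E\|_{L^p_tL^2_x}\leq C(1+c^{1-2/p})\calE_0^c+\|e^{-c^2 t}\|_{L^p_t}\,\|cE_0^c-\overline E(0)\|_{L^2},
\end{equation*}
with the identical estimate for $j^c$ via \eqref{c_rate3}. Inserting $\|cE_0^c-\overline E(0)\|_{L^2}\leq c\|E_0^c\|_{L^2}+\|\overline E(0)\|_{L^2}$ bounds this by $C(1+c^{1-2/p})\bigl(\|u_0^c-u_0\|_{L^2}+\|B_0^c-B_0\|_{L^2}+\|E_0^c\|_{L^2}\bigr)+o(1)$. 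For $1\leq p<2$ the prefactor $c^{1-2/p}$ vanishes and all initial norms are bounded, proving (i); at $p=2$ the bound reduces to $C\|E_0^c\|_{L^2}+o(1)$, vanishing under $\|E_0^c\|_{L^2}\to 0$; and at $2<p<\infty$ hypothesis \eqref{eq0131mon} is precisely the quantity that kills the bound, yielding the convergence half of (iii).

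For the non-convergence halves of (ii) and (iii) I argue by contradiction. If $cE^c\to F$ in $L^p(0,T;L^2)$ for some $p\geq 2$, then H\"older's inequality on the bounded interval $[0,T]$ yields $cE^c\to F$ in $L^1(0,T;L^2)$, while case (i) gives $cE^c\to\overline E$ in that same space, forcing $F=\overline E$. A reverse triangle inequality applied to \eqref{c_rate2}, together with $\|cE_0^c-\overline E(0)\|_{L^2}\geq c\|E_0^c\|_{L^2}-\|\overline E(0)\|_{L^2}$, produces
\begin{equation*}
    \|cE^c-\overline E\|_{L^p_tL^2_x}\gtrsim c^{1-2/p}\|E_0^c\|_{L^2}-Cc^{1-2/p}\bigl(\|u_0^c-u_0\|_{L^2}+\|B_0^c-B_0\|_{L^2}\bigr)-o(1),
\end{equation*}
which has positive $\limsup$ at $p=2$ whenever $\|E_0^c\|_{L^2}\not\to 0$ (the subtracted differences vanishing by Condition~\ref{cond_initial}), and at $p>2$ under the stated $\limsup$ hypothesis. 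The same reasoning, with \eqref{c_rate3} in place of \eqref{c_rate2}, transfers the conclusion to $j^c$.

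The main obstacle I expect is the $p>2$ handling of $j^c$, since \eqref{c_rate3} requires the extra assumption \eqref{ass_eng_EM_2}, strictly stronger than \eqref{ass_eng_EM}. The cleanest remedy is to bypass \eqref{c_rate3} by exploiting Ohm's law $j^c=cE^c+\bbP(u^c\times B^c)$, writing $j^c-\overline j=(cE^c-\overline E)+\bbP(u^c\times B^c-\overline u\times\overline B)$ and controlling the bilinear correction through Corollary~\ref{cor_conv_uB} and the uniform $H^m$-bounds. A secondary bookkeeping issue is absorbing the absolute constant $C$ in the lower bound against the unit coefficient in the stated $\limsup$ hypothesis, which is handled either by sharpening the tracking of constants in Theorem~\ref{thm_conv} or by restricting to data families in which $\|u_0^c-u_0\|_{L^2}+\|B_0^c-B_0\|_{L^2}=o(\|E_0^c\|_{L^2})$ so that the constant becomes immaterial.
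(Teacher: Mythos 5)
Your proposal is correct and follows essentially the same route as the paper: an upper bound for $\|cE^c-\overline E\|_{L^p(0,T;L^2)}$ and $\|j^c-\overline j\|_{L^p(0,T;L^2)}$ via the triangle inequality applied to \eqref{c_rate2}--\eqref{c_rate3} together with $\|e^{-c^2t}\|_{L^p(0,T)}\simeq c^{-2/p}$, a lower bound via the reverse triangle inequality, and identification of the only admissible limit through the already established $L^1(0,T;L^2)$ convergence. The two obstacles you flag are not handled differently in the paper: for $j^c$ it simply reads ``all assumptions in Theorem~\ref{thm_conv}'' as including \eqref{ass_eng_EM_2} and invokes \eqref{c_rate3} directly (``the analysis for $j^c$ is similar''), so no Ohm's-law detour is needed, and the mismatch between the unit coefficient in the hypothesis of part (iii) and the implicit constant in the lower bound \eqref{lower_E} is left unaddressed there as well.
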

\begin{remark}
The case (iii) in Corollary \ref{cor_conv_nonconv} can be understood via the following special case. To see this, we temporarily take $u^c_0=u_0$ and $B^c_0=B_0$ so that
\begin{equation*}
    \calE^c_0=\frac{1}{c^2}(\|cE^c_0-\overline{E}(0)\|_{H^1}+1) = \frac{1}{c}\|E^c_0\|_{H^1} + O\left(\frac{1}{c^2}\right).
\end{equation*}
If we assume that $\|E^c_0\|_{L^2}$ decays with certain rates with
\begin{equation*}
        \|E^c_0\|_{L^2(\mathbb{R}^3)} \lesssim \frac{1}{c^{1-\beta}} \quad \text{for some }\beta \in [0,1),
\end{equation*}
then, $cE^c \to \overline{E}$ and $j^c \to \overline{j}$ in $L^p_tL^2_x$ for all $p \in [1,\frac{2}{\beta})$. Moreover, by carefully seeing the estimate in the proof, we can also obtain the convergence rate explicitly as
\begin{equation*}
    \|cE^c-\overline{E}\|_{L^p(0,T:L^2(\mathbb{R}^3))}\lesssim \frac{1}{c^{\frac{2}{p}-\beta}} \quad\text{and}\quad 
    \|j^c-\overline{j}\|_{L^p(0,T:L^2(\mathbb{R}^3))}\lesssim \frac{1}{c^{\frac{2}{p}-\beta}}.
\end{equation*}
If we assume that the lower bound of the decay rate of $\|E^c_0\|_{L^2}$ by
\begin{equation*}
    \|E^c_0\|_{L^2(\mathbb{R}^3)} \gtrsim \frac{1}{c^{1-\gamma}} \quad \text{for some }\gamma \in (0,1],
\end{equation*}
then, neither $cE^c$ nor $j^c$ converges to any limit in $L^p(0,T:L^2(\mathbb{R}^3))$ for every $p \in [\frac{2}{\gamma},\infty]$. Furthermore, we compute the blow-up rate by
\begin{equation*}
    \|cE^c-\overline{E}\|_{L^p(0,T:L^2(\mathbb{R}^3))}\gtrsim c^{\gamma-\frac{2}{p}}\quad\text{and}\quad 
    \|j^c-\overline{j}\|_{L^p(0,T:L^2(\mathbb{R}^3))}\gtrsim c^{\gamma-\frac{2}{p}}.
\end{equation*}
We omit the details.
\end{remark}

We employ Condition~\ref{cond_initial} and the convergence of $(u^c,B^c)$ established in Corollary~\ref{cor_conv_uB}. Then, some minor modification of the estimate \eqref{energyjump0305} leads to
\begin{equation}\label{eq1721tue}
       \lim_{c\to\infty}  \left(\frac{1}{2}\| E^c(t) \|_{L^2}^2 - \frac{1}{2}\| E^c_0 \|_{L^2}^2 + \int_0^t\|j^c(\tau)\|_{L^2}^2 \,\ud \tau \right) = \int_0^t\|\overline{j}\|_{L^2}^2 \,\ud \tau ,
\end{equation}
for all $t \in(0,T]$. Notice that \eqref{eq1721tue} essentially comes from the simple energy conservation identity.
We now analyze how the energy contribution from $\frac{1}{2}\|E^c_0\|_{L^2}^2$, compared to the MHD system, is distributed. 
We state our finial corollary concerning the energy flow.
\begin{corollary}\label{cor_energyjump}
Assume all assumptions in Theorem~\ref{thm_conv}. 
Then, we have
\begin{equation}\label{energy_jump}
        \lim_{c \to \infty} \|  E^c(t) \|_{L^2}^2 = 0,\quad \mbox{and}\quad \lim_{c\to \infty} \left(\int_0^t\|j^c(\tau)\|^2_{L^2}\, \ud \tau -\frac{1}{2}\|E^c_0\|_{L^2}^2\right)=  \int_0^t\|\,\overline{j}\,(\tau) \|_{L^2}^2\, \ud \tau ,
    \end{equation}
for each fixed $t \in (0,T]$.
\end{corollary}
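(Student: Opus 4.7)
The plan is to first observe that once we establish the pointwise-in-time limit $\lim_{c\to\infty}\|E^c(t)\|_{L^2}^2=0$ for each fixed $t\in(0,T]$, the second assertion in \eqref{energy_jump} follows immediately by substituting into the identity \eqref{eq1721tue}, which has already been derived in the paragraph preceding the corollary from the two energy conservation laws together with the $L^\infty(0,T;L^2)$-convergence of $(u^c,B^c)$ supplied by Corollary~\ref{cor_conv_uB}. Consequently, the whole proof reduces to establishing the instantaneous vanishing of $\|E^c(t)\|_{L^2}$ for every $t>0$.

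To obtain this, I would invoke the difference estimate \eqref{c_rate2} of Theorem~\ref{thm_conv} with $p=\infty$, giving
\begin{equation*}
\bigl\| cE^c - \overline{E} - e^{-c^2 t}(cE^c_0-\overline{E}(0)) \bigr\|_{L^\infty(0,T;L^2)} \leq C\,(1+c)\,\calE^c_0.
\end{equation*}
Dividing by $c$ and applying the triangle inequality yields, for every $t\in(0,T]$,
\begin{equation*}
\|E^c(t)\|_{L^2} \leq \tfrac{1}{c}\|\overline{E}(t)\|_{L^2} + e^{-c^2 t}\bigl\|E^c_0 - \tfrac{1}{c}\overline{E}(0)\bigr\|_{L^2} + \tfrac{C(1+c)}{c}\,\calE^c_0.
\end{equation*}

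For fixed $t>0$ I would then check that each of the three terms vanishes as $c\to\infty$: the first because $\overline{E}$ lies in $L^\infty(0,T;L^2)$ (which follows from the a priori bound \eqref{ass_eng_MHD2016} together with Sobolev embedding applied to $\overline{E}=\bbP(\nabla\times\overline{B}-\overline{u}\times\overline{B})$); the second because the uniform $H^1$-bound on $E^c_0$ from Condition~\ref{cond_initial} keeps $\|E^c_0\|_{L^2}$ bounded while $e^{-c^2 t}$ decays super-polynomially; and the third because $\calE^c_0\to 0$ (using $\frac{1}{c^2}\|cE^c_0-\overline{E}(0)\|_{H^1}\leq\frac{1}{c}\sup_c\|E^c_0\|_{H^1}+\frac{1}{c^2}\|\overline{E}(0)\|_{H^1}$ together with the $L^2$ convergence of $u^c_0$ and $B^c_0$ in Condition~\ref{cond_initial}) while $(1+c)/c\to 1$.

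The main subtle point is precisely the exponential initial-layer term $e^{-c^2 t}(cE^c_0-\overline{E}(0))$: at $t=0$ it carries magnitude of order $c$, so the result genuinely requires $t>0$, and it is the rapid decay of $e^{-c^2 t}$ for positive $t$ that absorbs the growing prefactor produced by dividing \eqref{c_rate2} by $c$. Once $\|E^c(t)\|_{L^2}^2\to 0$ is in hand, substituting back into \eqref{eq1721tue} delivers
\begin{equation*}
\lim_{c\to\infty}\Bigl(\int_0^t\|j^c(\tau)\|_{L^2}^2\,\ud\tau-\tfrac{1}{2}\|E^c_0\|_{L^2}^2\Bigr)=\int_0^t\|\,\overline{j}\,(\tau)\|_{L^2}^2\,\ud\tau,
\end{equation*}
which is the second identity in \eqref{energy_jump}, and the proof is complete.
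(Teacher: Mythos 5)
Your proposal is correct, but it reaches the first limit by a genuinely different route than the paper. You obtain $\|E^c(t)\|_{L^2}\to 0$ directly from the $p=\infty$ case of \eqref{c_rate2}: dividing by $c$ and using that, for fixed $t>0$, the factor $e^{-c^2 t}$ annihilates the $O(c)$ initial-layer term $e^{-c^2t}(cE^c_0-\overline{E}(0))$ while $\tfrac{1+c}{c}\,\calE^c_0\to 0$ and $\tfrac1c\|\overline{E}(t)\|_{L^2}\to 0$; this is a clean pointwise-in-time argument that needs neither the current estimate \eqref{c_rate3} nor any compactness input. The paper instead proceeds indirectly: it first quantifies the energy identity as \eqref{eq1206tue}, handles the cross term $\int \overline{j}\cdot(j^c-\overline{j})$ via the weak $L^2_tL^2_x$ convergence of $j^c$ (deduced from the uniform $L^2_tL^2_x$ bound together with the strong $L^1_tL^2_x$ convergence), then uses \eqref{c_rate3} with $p=2$ to compute $\int_0^t\|j^c-\overline{j}\|_{L^2}^2\,\ud\tau = \tfrac12\|E^c_0\|_{L^2}^2+o(1)$ (its \eqref{eq1223tue}), and only then extracts $\|E^c(t)\|_{L^2}\to 0$ by subtraction. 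Your route is shorter and avoids both the weak-convergence step and \eqref{c_rate3}; the paper's route has the added value of isolating explicitly that the current dissipation absorbs exactly $\tfrac12\|E^c_0\|_{L^2}^2$, which is the structurally interesting half of \eqref{energy_jump}. For the second assertion both arguments coincide: like the paper, you substitute the vanishing of $\|E^c(t)\|_{L^2}^2$ into the previously established identity \eqref{eq1721tue}, which is legitimate since that identity rests only on the two energy conservation laws, Condition~\ref{cond_initial}, and Corollary~\ref{cor_conv_uB}. The only cosmetic caveat is that \eqref{c_rate2} is stated as an $L^\infty(0,T;L^2)$ bound, so to use it at a fixed time you should note that all quantities involved are continuous in time (the solutions lie in $C([0,T];H^m)$), which makes the essential supremum a genuine pointwise bound.
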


\subsection{Strategy of the proof}
To prove Theorem~\ref{thm_conv}, we proceed with the following steps. We assume that the solutions $(\overline{u}$, $\overline{B})$ to MHD system  are given. First, we introduce \textit{auxiliary linear system} by
\begin{equation}\label{linear_1706}
	\begin{cases}
	    \begin{aligned}
		&\frac{1}{c}\partial_t E_{L} - \nabla \times B_{L} + cE_{L} = \overline{E},\quad &\operatorname{div} E_L =0, \\
		&\frac{1}{c}\partial_t B_{L} + c\nabla \times E_{L} = \nabla \times \overline{E}, \quad &\operatorname{div} B_L = 0,\\
        &\qquad E_{L}\vert_{t=0}=E^c_0,\quad B\vert_{t=0}=0.
	\end{aligned}
	\end{cases}
\end{equation}
A significant portion of our analysis depends on the convergence characteristics of the auxiliary linear system \eqref{linear_1706}.  
Finally, we denote \textit{the error part} $(\widetilde{u}, \widetilde{E}, \widetilde{B})$ by
\begin{equation}\label{Error_term}
    \begin{cases}
        \widetilde{u} := u^c - \overline{u}, \\
        \widetilde{E} := E^c - E_L, \\
        \widetilde{B} := B^c-\overline{B}-B_L.
    \end{cases}
\end{equation}
Here, we did not specify all the dependencies on $c$ in the solution for readability. Then, the error part $(\widetilde{u}, \widetilde{E}, \widetilde{B})$ solves the system
\begin{equation*}\label{Error_eqn}
\begin{cases}
	    \begin{aligned}
		\partial_t \widetilde{u} + (\widetilde{u} \cdot \nabla)\widetilde{u} + \nabla \widetilde{p} &= j^c \times B^c - \overline{j} \times \overline{B} - (\overline{u} \cdot \nabla)\widetilde{u} - (\widetilde{u} \cdot \nabla)\overline{u}, &\operatorname{div} \widetilde{u}=0,\\
		\frac{1}{c}\partial_t \widetilde{E} -  \nabla \times \widetilde{B} +  c\widetilde{E} &= - \bbP (u^c \times B^c  - \overline{u} \times \overline{B}  ), &\operatorname{div} \widetilde{B}=0, \\
		\frac{1}{c}\partial_t \widetilde{B} + \nabla \times \widetilde{E} &= 0, &  \operatorname{div} \widetilde{E} = 0,\\
         \widetilde{u}\vert_{t=0}=u^c_0-u_0,\quad &\widetilde{E}\vert_{t=0}=0,\quad \widetilde{B}\vert_{t=0}= B^c_0-B_0.
	\end{aligned}
	\end{cases}
\end{equation*}
We shall crucially use the fact that initial data of error part tend to zero where it holds due to Condition~\ref{cond_initial}, to show that the error part converges to zero as $c \to \infty$ with certain specified decay rate, and this is also important step in our analysis. Notice that the system \eqref{linear_1706} appears similar to the damped Maxwell
system introduced in \cite{arsenio2015derivation}, but it exhibits subtle differences in the source term.  Here, the design of the auxiliary linear system \eqref{linear_1706} is based on the formal assumption that $cE^c$ converges to $\overline{E}$ and $B^c$ converges to $\overline{B}$ as well as the expectation that $(E_L,B_L)$ closely resembles $(E^c,B^c-\overline{B})$. In other words, it is designed by inversely predicting the form of the source term under the assumption that $cE_L$ converges to $\overline{E}$ and $B_L$ converges to $0$ as $c \to \infty$.

In summary,  we decompose the solution to \eqref{main_eqn} into three pieces, i.e.,
	\[
	(\mbox{soln. to \eqref{main_eqn}})= ({\mbox{soln. to limit system \eqref{eq-MHD}}}) + ({\mbox{soln. to auxiliary linear system}})+({\mbox{error part}}).
	\]
Since the error part always tends to zero, the difference estimate between the MHD system and Euler-Maxwell systems can be understood by analyzing the linearized system. This is the overall story of the main theorem.


\subsection{Outline of the paper}
This paper is organized as follow. In section \ref{sec2}, we investigate the local existence and uniqueness of strong solutions to the MHD system and the Euler-Maxwell system, which validate the assumptions of Theorem~\ref{thm_conv}. Using this, we improve convergence sense to $L^p_tH^s_x$. In section \ref{Auxiliary_linear_system}, we introduce an auxiliary linear system and we conduct a convergence analysis of the linear part using the energy estimate. We also obtain the convergence rate of linear parts. In section \ref{finalproof}, we derive the error part estimate using $L^2$-energy estimate. We combine these results to finish the main theorem and corollaries.

\section{Local existence of solutions}\label{sec2}

\subsection{Existence of local-in-time solutions}

In this section, we study the local time existence and uniqueness of the Euler-Maxwell system and the MHD system. Let us first consider the Euler-Maxwell equations. We will show that the local in time existence theory holds for each $c \geq c_0$ for some fixed $c_0 > 0$. Additionally, its existence time $T^c$ defined for each $c\geq c_0$ has a uniform lower bound. These results are necessary for our framework of limit problem, and validate the assumptions of Theorem~\ref{thm_conv}. 
\begin{theorem}[Euler-Maxwell system] \label{thm_ext}
Consider the initial data $(u_0,\,E_0,\,B_0) \in H^{m}(\bbR^3)$ for some integer $m > \frac {5}{2}$ with $\operatorname{div} u_0 =  \operatorname{div}{E_0} =\operatorname{div}{B_0} =0$. Then, there exist constants $M>0$ and $T > 0$ depending on $m$, $\|u_0\|_{H^m}$, $\| E_0 \|_{H^m}$, and $\|B_0\|_{H^m}$ such that the Euler-Maxwell system \eqref{main_eqn} possesses a unique solution
	\begin{equation}\label{un_1916}
	\begin{aligned}
		(u,\, E,\, B) \in C([0,T] ; H^m(\bbR^3)), \qquad j \in L^2([0,T] ; H^m(\bbR^3)),
	\end{aligned}
\end{equation}
which satisfies
\begin{equation}\label{uniform_esti_2323}
\begin{aligned}
			\sup_{t \in [0,T]} \left(\| u(t) \|_{H^m}^2 + \| E(t) \|_{H^m}^2 + \| B(t) \|_{H^m}^2\right)^{\frac{1}{2}}  + \left(\int_0^T \|j(t)\|_{H^m}^2\,\ud t\right)^{\frac{1}{2}}  \leq M.
\end{aligned}
\end{equation}

\end{theorem}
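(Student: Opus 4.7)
The plan is to establish local existence by an approximation scheme together with a priori $H^m$ estimates that are uniform in $c\ge c_0$, and then pass to the limit and verify uniqueness. Step 1 is construction: one introduces the Friedrichs truncation $J_N=\calF^{-1}\mathbf{1}_{|\xi|\le N}\calF$ and replaces every nonlinear occurrence of $u,E,B$ by its projection $J_N u, J_N E, J_N B$, while preserving the linear Maxwell structure. The regularized system is an ODE on a closed subspace of divergence-free $L^2$ fields and admits a unique global-in-time solution in $H^m$ for each fixed $N$ by Cauchy--Lipschitz. (An equivalent route via a mollified Picard iteration works equally well.)

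Step 2, the heart of the proof, is the $c$-independent $H^m$ estimate. For $|\alpha|\le m$, we test $\partial^\alpha$(Euler) against $\partial^\alpha u$, $\partial^\alpha$(Amp\`ere) against $c\,\partial^\alpha E$, and $\partial^\alpha$(Faraday) against $c\,\partial^\alpha B$. The $c$-scaled Maxwell cross terms cancel via integration by parts, and substituting Ohm's law $\partial^\alpha j=c\,\partial^\alpha E+\partial^\alpha\bbP(u\times B)$ inside $-c\int\partial^\alpha E\cdot\partial^\alpha j$ produces a genuine dissipation $-c^2\|E\|_{H^m}^2$ together with a cross term $-c\int\partial^\alpha E\cdot\partial^\alpha(u\times B)$. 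The Euler--Lorentz contribution is rearranged by the triple-product identity
\[
\int\partial^\alpha u\cdot(\partial^\alpha j\times B)=-\int\partial^\alpha j\cdot(\partial^\alpha u\times B),
\]
after which the dangerous $O(c)$ interactions take the form
\[
c\int\partial^\alpha E\cdot(\partial^\alpha u\times B)\quad\text{and}\quad c\int\partial^\alpha E\cdot(u\times\partial^\alpha B),
\]
both of which can be absorbed into $\tfrac14 c^2\|E\|_{H^m}^2$ by Young's inequality at the cost of polynomial factors in $\|u\|_{H^m},\|B\|_{H^m},\|B\|_{L^\infty}$. The subleading commutators involving $\partial^{<\alpha}j$ are split via Ohm's law in the same way: the $c\partial^{<\alpha}E$ part is absorbed by the dissipation, and the $\partial^{<\alpha}\bbP(u\times B)$ part is a standard Moser/Kato--Ponce nonlinear estimate. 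This yields
\[
\tfrac{d}{dt}\calE_m(t)+c^2\|E\|_{H^m}^2\le C\,\calE_m(t)^{3/2},\qquad \calE_m:=\|u\|_{H^m}^2+\|E\|_{H^m}^2+\|B\|_{H^m}^2,
\]
with $C=C(m)$ independent of $c\ge c_0$ and of $N$. An ODE comparison produces a lifespan $T>0$ and a bound $M>0$ depending only on $\|u_0\|_{H^m}+\|E_0\|_{H^m}+\|B_0\|_{H^m}$; integrating the dissipation and invoking Ohm's law yields $j\in L^2(0,T;H^m)$ with the same uniform bound.

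Step 3 is passage to the limit and uniqueness. The uniform $L^\infty_t H^m$ control of $(u_N,E_N,B_N)$ together with the $c$-rescaled equations gives uniform-in-$N$ control of $\partial_t(u_N,E_N,B_N)$ in $L^\infty(0,T;H^{m-1})$ (the fixed factor of $c$ is harmless for each individual solution), so by Aubin--Lions the convergence is strong in $C([0,T];H^{m-1}_{\mathrm{loc}})$, enough to pass to the limit in every nonlinear term. Time continuity into $H^m$ is obtained by a standard Bona--Smith regularization argument. Uniqueness follows from an $L^2$ energy estimate for the difference of two solutions, using $H^m\hookrightarrow L^\infty$ (since $m>5/2$) to bound all nonlinear differences in Lipschitz form.

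The principal obstacle is Step 2: making the lifespan $T$ and the size $M$ genuinely independent of $c$. The naive $H^m$ energy identity contains several cross terms carrying one extra factor of $c$ coming from Ohm's law, and each one has to be absorbed into the dissipation $c^2\|E\|_{H^m}^2$ rather than estimated crudely. This rests on the triple-product cancellation at top order together with a careful Young-inequality bookkeeping at subleading orders; the same structure will later be reused in Sections~\ref{Auxiliary_linear_system} and~\ref{finalproof} when comparing Euler--Maxwell with its MHD limit.
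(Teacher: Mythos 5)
Your proposal is correct and follows the paper's overall scheme (regularization, a $c$-uniform $H^m$ energy estimate, an ODE/Riccati comparison for a uniform lifespan, compactness, and an $L^2$ uniqueness estimate), but the key energy estimate is organized differently. The paper substitutes Ohm's law the other way around, writing $\partial^\alpha (cE)=\partial^\alpha j-\partial^\alpha\bbP(u\times B)$ in the term $-\int\partial^\alpha j\cdot\partial^\alpha(cE)\,\dx$, so that the dissipation retained is $\|j\|_{H^m}^2$ rather than $c^2\|E\|_{H^m}^2$; after this substitution no factor of $c$ appears anywhere in the estimate, so there are no $O(c)$ cross terms to absorb, and the remaining terms are handled by Moser estimates with $\|B\|_{L^\infty}$, $\|u\|_{L^\infty}$, $\|j\|_{L^\infty}$, the top-order $j$ contributions being absorbed fractionally into $\frac12\|j\|_{H^m}^2$ (this also hands you the $L^2_tH^m$ bound on $j$ in \eqref{uniform_esti_2323} directly, whereas you recover it a posteriori from $j=cE+\bbP(u\times B)$ together with the integrated dissipation $c^2\|E\|_{L^2_tH^m}^2$ --- which is legitimate). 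Your variant, keeping $c^2\|E\|_{H^m}^2$ as the dissipation and absorbing the $O(c)$ interactions $c\int\partial^\alpha E\cdot(\partial^\alpha u\times B)$, $c\int\partial^\alpha E\cdot(u\times\partial^\alpha B)$ and the lower-order pieces by Young, is a valid algebraic reshuffling of the same identity; the paper's choice is simply the shorter bookkeeping since the "principal obstacle" you identify never arises in that formulation. One small slip: after the Young absorptions your right-hand side contains terms like $\|B\|_{L^\infty}^2\|u\|_{H^m}^2\lesssim \mathcal{E}_m^2$, so the differential inequality should read $\frac{d}{dt}\mathcal{E}_m+c^2\|E\|_{H^m}^2\le C(1+\mathcal{E}_m)^2$ (as in \eqref{eng_Hm}) rather than $C\,\mathcal{E}_m^{3/2}$; this does not affect the conclusion, since any polynomial right-hand side yields the uniform $T$ and $M$ by the same comparison argument.
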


We first state one lemma which demonstrate an a priori estimate for Euler-Maxwell system. Below we set

\begin{equation}\label{X_t}
    X(t) := \|u(t)\|_{H^m}^2 + \|E(t)\|^2_{H^m} + \|B(t)\|^2_{H^m},    
\end{equation}
and
\begin{equation}\label{A_t}
    A(t) : =  \| \nabla u (t) \|_{L^{\infty}}  + \| u (t) \|_{L^{\infty}}^2 + \| B(t) \|_{L^{\infty}}^2  + \| j (t) \|_{L^{\infty}}.
\end{equation}

\begin{lemma} [a priori estimate for Euler-Maxwell] \label{Energy_estimate_1015}
Let $m$ be an integer satisfying $m > \frac {5}{2}$. Assume that $(u,E,B) \in C([0,T);H^{m}(\mathbb{R}^3)) \cap \operatorname{Lip}(0,T;H^{m-1}(\bbR^3))$ satisfies \eqref{main_eqn}. Recall \eqref{X_t}-\eqref{A_t}.
There exists a constant $C=C(m)>0$ such that
it follows 
\begin{equation}\label{stepk_est}
    \frac {1}{2} \frac {\ud}{\ud t} X(t) + \frac{1}{2}\| j(t) \|_{H^m}^2 
		\leq C  A(t) X(t).
\end{equation}
In particular, it holds that
\begin{equation}\label{eng_Hm}
    \begin{gathered}
        \frac {1}{2} \frac {\ud}{\ud t} X(t) + \frac{1}{4}\| j \|_{H^m}^2 \\
		\leq C (\| \nabla u \|_{L^{\infty}} + \| u \|_{L^{\infty}}^2 +  \| B \|_{L^{\infty}}^2  ) X(t)
        + CX(t)^2\\
        \leq C (1+X(t))^2.
    \end{gathered}
\end{equation}
\end{lemma}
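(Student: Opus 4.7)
The plan is a standard $H^m$ energy estimate on the three evolution equations of \eqref{main_eqn}, exploiting a structural cancellation between Amp\`ere's and Faraday's laws and invoking Ohm's law to extract the dissipation in $j$. For each multi-index $\alpha$ with $|\alpha|\leq m$, I apply $D^\alpha$ to the Euler, Amp\`ere, and Faraday equations and take $L^2$ inner products with $D^\alpha u$, $cD^\alpha E$, $cD^\alpha B$, respectively. The pressure term vanishes by $\operatorname{div} u=0$; moreover the two Maxwell cross-terms cancel,
\[
c\langle D^\alpha(\nabla\times B),D^\alpha E\rangle-c\langle D^\alpha(\nabla\times E),D^\alpha B\rangle=0,
\]
by skew-adjointness of $\nabla\times$ on $L^2(\bbR^3)$, so all potentially dangerous $c$-dependent contributions disappear. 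Summing the three resulting identities yields
\[
\tfrac12\tfrac{d}{dt}\bigl(\|D^\alpha u\|_{L^2}^2+\|D^\alpha E\|_{L^2}^2+\|D^\alpha B\|_{L^2}^2\bigr)=-\langle D^\alpha((u\cdot\nabla)u),D^\alpha u\rangle+\langle D^\alpha(j\times B),D^\alpha u\rangle-c\langle D^\alpha j,D^\alpha E\rangle.
\]
Ohm's law $cE=j-\mathbb{P}(u\times B)$ rewrites the last term as $-\|D^\alpha j\|_{L^2}^2+\langle D^\alpha j,D^\alpha\mathbb{P}(u\times B)\rangle$, so after summing over $|\alpha|\leq m$ the dissipation $\|j\|_{H^m}^2$ appears on the left-hand side.

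Next, I would estimate each nonlinear term. The transport term reduces via $\operatorname{div} u=0$ to the commutator $\langle[D^\alpha,u\cdot\nabla]u,D^\alpha u\rangle$, controlled by $\|\nabla u\|_{L^\infty}\|u\|_{H^m}^2$ via Kato--Ponce. For the Lorentz force and the Ohm cross term I invoke the Moser product estimate
\[
\|fg\|_{H^m}\lesssim\|f\|_{L^\infty}\|g\|_{H^m}+\|f\|_{H^m}\|g\|_{L^\infty},
\]
with $\mathbb{P}$ bounded on $H^m$, together with Cauchy--Schwarz and Young's inequality, to obtain
\[
\bigl|\langle D^\alpha(j\times B),D^\alpha u\rangle\bigr|+\bigl|\langle D^\alpha j,D^\alpha\mathbb{P}(u\times B)\rangle\bigr|\leq\tfrac14\|j\|_{H^m}^2+C\bigl(\|j\|_{L^\infty}+\|u\|_{L^\infty}^2+\|B\|_{L^\infty}^2\bigr)X(t).
\]
Absorbing $\tfrac14\|j\|_{H^m}^2$ into the dissipation produces \eqref{stepk_est}. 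For \eqref{eng_Hm}, the condition $m>5/2$ yields $\|j\|_{L^\infty}\lesssim\|j\|_{H^m}$ by Sobolev embedding, so Young applied to the single term $C\|j\|_{L^\infty}X(t)$ converts it into $\tfrac14\|j\|_{H^m}^2+CX(t)^2$; this reduces the dissipation coefficient from $\tfrac12$ to $\tfrac14$ and gives the first line of \eqref{eng_Hm}. The second line follows from $\|\nabla u\|_{L^\infty}+\|u\|_{L^\infty}^2+\|B\|_{L^\infty}^2\lesssim 1+X(t)$ by Sobolev embedding (using $m-1>3/2$).

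The main obstacle is keeping the estimate uniform in $c$: Amp\`ere's equation carries a factor of $c$ that threatens to blow up as $c\to\infty$, and only Ohm's law transforms $c\langle j,E\rangle$ into the non-negative dissipation $\|j\|_{L^2}^2$ plus a manageable cross term. All subsequent product splittings must then place at most one factor of $j$ in the $H^m$ norm, so that it can be absorbed into $\|j\|_{H^m}^2$, and the other in the $L^\infty$ norm, so that it contributes to $A(t)$; the Moser decomposition above is exactly what realizes this splitting in every instance.
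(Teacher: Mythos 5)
Your proposal is correct and follows essentially the same route as the paper's proof: the identical $H^m$ energy identity with the skew cancellation of the Maxwell cross terms, Ohm's law converting $-\langle \partial^\alpha j,\partial^\alpha(cE)\rangle$ into $-\|\partial^\alpha j\|_{L^2}^2$ plus the $u\times B$ cross term, the commutator bound $\|\nabla u\|_{L^\infty}\|u\|_{H^m}^2$ for the transport term, product estimates splitting each factor between $H^m$ and $L^\infty$ with Young's inequality to absorb into the dissipation (the paper also uses that $\partial^\alpha j$ is divergence-free to drop $\mathbb{P}$, which your boundedness of $\mathbb{P}$ on $H^m$ replaces), and Sobolev embedding plus Young on $\|j\|_{L^\infty}X(t)$ to pass from \eqref{stepk_est} to \eqref{eng_Hm}. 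No gaps.
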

We provide the proof of Lemma \ref{Energy_estimate_1015} in the appendix.
\begin{proof}[Proof of Theorem~\ref{thm_ext}] It suffices to present a uniform a priori estimate for the solutions.
Let $c\geq c_0$ be fixed temporarily. We construct a sequence of solutions of the mollified
system \eqref{main_eqn} using standard existence theory (e.g. Theorem~3.1, \cite{majda2001vorticity}). Then, each solutions satisfy the a priori estimate \eqref{eng_Hm} on the uniform time interval, and this allows us to pass to a converging subsequence and show the existence of smooth solutions on local-in times.
To see that the solutions exist on the uniform time $[0,T]$ on each $c \geq c_0$, we set
\begin{equation*}
     y(t) = y_c(t) := X(t) +1 = \| u(t) \|_{H^m}^2 + \| E(t) \|_{H^m}^2 + \| B(t) \|_{H^m}^2+1.
\end{equation*}
Then, it follows from \eqref{eng_Hm} that
\begin{equation}\label{ODE_sun_2042}
    \frac {\ud}{\ud t} y(t) + \| j(t) \|_{H^m}^2 \leq C\,y(t)^{2}.
\end{equation} We emphasize that the constant $C>0$ in \eqref{ODE_sun_2042} is independent to the parameter $c \geq c_0$, and it  depends only on $m>0$.  Thus, it holds
\begin{equation*}
    y(t)\leq \frac{y(0)}{1-Cy(0) t}.
\end{equation*} Note that \begin{equation*}
    y(0) \leq \| u_0 \|_{H^m}^2 +  \| E_0 \|_{H^m}^2 + \| B_0 \|_{H^m}^2+1.
\end{equation*} 
We take a positive number $T$ with
\begin{equation*}1-C \left( \| u_0 \|_{H^m}^2 + \| E_0 \|_{H^m}^2 + \| B_0 \|_{H^m}^2+1 \right) T = \frac{1}{4}.
\end{equation*} Then, on the time interval $[0,T]$, it holds \begin{equation}\label{y_est}
    y(t) \leq 2 \left( \| u_0 \|_{H^m}^2 + \| E_0 \|_{H^m}^2 + \| B_0 \|_{H^m}^2+1 \right),
\end{equation} where $T$ is uniformly bounded below with respect to $c \in [c_0,\infty)$. We left the proof of the uniqueness to the appendix since it is standard.
\end{proof}

It is well-known that the MHD system is locally well-posed, which is well established in \cite{chae2014well}.  
\begin{theorem}[MHD system] \label{thm_ext_inviscid_MHD}
Let $m$ be an integer satisfying $m > \frac {5}{2}$. Consider the initial data $(u_0,\,B_0) \in H^{m}(\bbR^3)$ with $\operatorname{div} u_0  =\operatorname{div}{B_0} =0$. Then there exist constants $T > 0$ and $\overline{M}>0$ depending on $m$, $\| u_0 \|_{H^m}$ and $\| B_0 \|_{H^m}$ such that the MHD system \eqref{eq-MHD} possesses a unique solution
	\begin{equation}\label{ass_eng_MHD_0}
	\begin{aligned}
		(\overline{u},\, \overline{B}) \in C([0,T] ; H^m(\bbR^3)), \qquad \nabla \overline{B} \in L^2([0,T] ; H^m(\bbR^3)),
	\end{aligned}
\end{equation}
which satisfies
	\begin{equation}\label{ass_eng_MHD}
		\begin{aligned}
			\sup_{t \in [0,T]} \left(\| \overline{u}(t) \|_{H^m}^2  + \| \overline{B}(t) \|_{H^m}^2\right)^{\frac{1}{2}}  + \left(\int_0^{T} \|\nabla \overline{B}(t)\|_{H^m}^2 \,\ud t\right)^{\frac{1}{2}} \leq \overline{M}.
		\end{aligned}
	\end{equation}
\end{theorem}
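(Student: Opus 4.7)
The plan is to mirror the a priori analysis carried out for the Euler-Maxwell system in Lemma~\ref{Energy_estimate_1015}, exploiting two simplifications: there is no electric field, and the magnetic equation is now parabolic rather than hyperbolic, which yields a genuine dissipation term $\|\nabla \overline{B}\|_{H^m}^2$. First I would derive an $H^m$-energy estimate by applying $D^\alpha$ with $|\alpha|\leq m$ to both equations in \eqref{eq-MHD} and pairing with $D^\alpha \overline{u}$ and $D^\alpha \overline{B}$ respectively. Incompressibility eliminates the pressure and the transport term at top order, and the magnetic equation contributes $\|\nabla D^\alpha \overline{B}\|_{L^2}^2$ on the left-hand side. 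The nonlinear contributions $\int D^\alpha((\nabla\times\overline{B})\times\overline{B})\cdot D^\alpha\overline{u}\,\ud x$ and $\int D^\alpha(\nabla\times(\overline{u}\times\overline{B}))\cdot D^\alpha\overline{B}\,\ud x$ are handled by standard commutator/Moser estimates together with the Sobolev embedding $H^m(\bbR^3)\hookrightarrow L^\infty(\bbR^3)$ valid for $m>5/2$, producing a bound of the form
\[
\tfrac{1}{2}\tfrac{\ud}{\ud t}\overline{X}(t) + \tfrac{1}{2}\|\nabla\overline{B}(t)\|_{H^m}^2 \leq C(1+\overline{X}(t))^2, \qquad \overline{X}(t):=\|\overline{u}(t)\|_{H^m}^2 + \|\overline{B}(t)\|_{H^m}^2,
\]
with $C=C(m)$. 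Comparison with the scalar ODE $\dot y = C(1+y)^2$ delivers a uniform existence time $T>0$ depending only on $m$, $\|u_0\|_{H^m}$, and $\|B_0\|_{H^m}$, and integrating the dissipation in time yields \eqref{ass_eng_MHD}.

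With the a priori estimate in hand, I would construct actual solutions by a mollification scheme: replace the nonlinear terms by their Friedrichs-mollified versions $J_\eps(\cdot)$ to obtain a family of ODEs in $H^m(\bbR^3)$ that is Lipschitz and hence admits global-in-time solutions $(\overline{u}_\eps,\overline{B}_\eps)$. Rerunning the calculation above shows that the same inequality holds uniformly in $\eps$ on the common interval $[0,T]$, so $\{(\overline{u}_\eps,\overline{B}_\eps)\}$ is bounded in $L^\infty(0,T;H^m)\cap L^2(0,T;H^{m+1})$ for the magnetic component. The MHD system itself then provides uniform bounds on $\partial_t \overline{u}_\eps$ and $\partial_t \overline{B}_\eps$ in appropriate negative-index spaces, so Aubin-Lions compactness yields a subsequence converging strongly in $C([0,T];H^{m-1}_{\mathrm{loc}})$; the limit $(\overline{u},\overline{B})$ inherits the uniform $H^m$-bound and solves \eqref{eq-MHD}. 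Continuity into $H^m$ in the strong topology is then upgraded from the weak-$\ast$ limit by means of the energy identity, as is standard.

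For uniqueness I would take two candidate solutions with the same initial data, set $\dlt u := \overline{u}_1 - \overline{u}_2$ and $\dlt B := \overline{B}_1 - \overline{B}_2$, and perform a plain $L^2$-energy estimate on the difference system. The parabolic dissipation $\|\nabla \dlt B\|_{L^2}^2$ absorbs the highest-derivative contributions produced by $(\nabla\times\overline{B})\times\overline{B}$ and $\nabla\times(\overline{u}\times\overline{B})$ after integration by parts, while the remaining lower-order terms are bounded in terms of $\|\dlt u\|_{L^2}^2+\|\dlt B\|_{L^2}^2$ using the $W^{1,\infty}$-bounds on each solution guaranteed by $m>5/2$. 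A Grönwall argument then forces $\dlt u \equiv 0$ and $\dlt B \equiv 0$ on $[0,T]$.

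The main technical obstacle is the absorption step in the $H^m$-estimate: the terms $\int D^\alpha(\nabla\times(\overline{u}\times\overline{B}))\cdot D^\alpha\overline{B}\,\ud x$ and $\int D^\alpha((\nabla\times\overline{B})\times\overline{B})\cdot D^\alpha\overline{u}\,\ud x$ formally carry $m+1$ derivatives on $\overline{B}$, which cannot be controlled by $\overline{X}(t)$ alone. One must move a derivative onto $D^\alpha\overline{u}$ (or onto the low-derivative $\overline{B}$ factor) via integration by parts and then split into a piece controlled by Moser/commutator estimates and a piece that is borrowed from the dissipation $\tfrac{1}{2}\|\nabla\overline{B}\|_{H^m}^2$ through Young's inequality. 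This book-keeping is routine but is where the parabolic regularization genuinely enters; without it the MHD estimate would not close at the $H^m$ level.
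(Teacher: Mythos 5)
Your proposal is correct, and it follows the same route the paper relies on: the paper's entire proof of this theorem consists of recording the $L^2$ conservation law \eqref{L2_est_MHD} and precisely the $H^m$ energy inequality you derive (with dissipation $\|\nabla\overline{B}\|_{H^m}^2$ on the left and $(\|\nabla\overline{u}\|_{L^\infty}+\|\nabla\overline{B}\|_{L^\infty})(\|\overline{u}\|_{H^m}^2+\|\overline{B}\|_{H^m}^2)$ on the right), and then citing \cite[Theorem~2.2]{chae2014well} for the remaining details; your mollification/Aubin--Lions construction and the $L^2$ difference estimate for uniqueness are exactly the standard steps that citation outsources. One small correction to your closing remark: the $H^m$ estimate for MHD does close even without the resistive term, by exploiting the cancellation between $\int D^\alpha((\nabla\times\overline{B})\times\overline{B})\cdot D^\alpha\overline{u}$ and $\int D^\alpha(\nabla\times(\overline{u}\times\overline{B}))\cdot D^\alpha\overline{B}$ (ideal MHD is locally well-posed in $H^m$, $m>5/2$); what the parabolic term buys in your argument is merely the option to absorb the top-order magnetic derivatives by Young's inequality instead of invoking that cancellation, and it is of course what produces the $L^2_tH^{m+1}_x$ bound in \eqref{ass_eng_MHD}.
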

\begin{proof}
Notice that smooth solutions to MHD system \eqref{eq-MHD} satisfy the formal energy conservation law
\begin{equation}\label{L2_est_MHD}
    \frac12 \frac{\ud}{\ud t} \left(  \| u \|_{L^2}^2+\| B \|_{L^2}^2 \right) + \| \nabla B \|_{L^2}^2 = 0,
\end{equation} and the following energy estimates
\[
\frac12 \frac{\ud}{\ud t} \left(  \| u \|_{H^m}^2+\| B \|_{H^m}^2 \right) + \| \nabla B \|_{H^m}^2 \lesssim (\| \nabla u \|_{L^{\infty}} + \| \nabla B \|_{L^{\infty}}) \left( \| u \|_{H^m}^2 + \| B \|_{H^m}^2\right), \qquad m > \frac 52.
\]
See \cite[Theorem~2.2]{chae2014well} for the details.
\end{proof}
\begin{remark}
 The estimates \eqref{uniform_esti_2323} and \eqref{ass_eng_MHD} validate the assumptions of Theorem~\ref{thm_conv}. We also emphasize that $c_0>0$ can be taken as an arbitrary small positive constant.
\end{remark}

\subsection{Improvement of the convergence}
We recall that Theorem~\ref{thm_conv} is based on the assumptional solution satisfying \eqref{ass_eng_EM} (or additionally \eqref{ass_eng_EM_2}). We again emphasize that \eqref{ass_eng_EM} and \eqref{ass_eng_EM_2} does not implies the uniform boundedness of other norms, such as $\|u^c\|_{L^{\infty}_tH^m_x}$.
We remark that the condition \eqref{uniform_esti_2323} implies the assumption \eqref{ass_eng_EM} and \eqref{ass_eng_EM_2}. Thus, on the time interval $[0,T]$ satisfying \eqref{uniform_esti_2323}, it is clear that the convergence results in Corollary~\ref{cor_conv_uB} hold. 

Combining with the Gagliardo-Nirenberg interpolation inequality and Corollary~\ref{cor_conv_uB}, one finds that the sense of convergence can be improved on the time interval $[0,T]$, since we can use the uniform estimate \eqref{uniform_esti_2323}. We state this without the proof as follows.

\begin{corollary}[Improvement of Convergence]
Let $(u^c,B^c,E^c)$ be a solution of Euler-Maxwell system \eqref{main_eqn} satisfying Condition \ref{cond_initial}. Let $(\overline{u},\overline{B})$ be a solution of MHD system \eqref{eq-MHD} with initial data $(u_0,B_0)$. Let $0<s<m$. Then, there exists a time $T$\footnote{Indeed, we can choose $T>0$ satisfying \eqref{ass_eng_MHD_0}-\eqref{ass_eng_MHD} and \eqref{un_1916}-\eqref{uniform_esti_2323}} such that: 
\vspace{0.5em}
\newline
$(u^c,B^c)$ converges to $(\overline{u},\,\overline{B})$ in $L^{\infty}([0,T];H^s(\mathbb{R}^3))$, in the sense that
\begin{equation*}
    \lim_{c \to \infty}\|(u^c -\overline{u}, B^c - \overline{B})\|_{L^{\infty}(0,T;H^s(\mathbb{R}^3))}=0.
\end{equation*}
The electric fields $E^c$ converges to zero in $L^p(0,T;H^s(\mathbb{R}^3))$ for every $p\in[1,\infty)$, i.e.
\begin{equation*}
    \lim_{c \to \infty}\|E^c\|_{L^{p}(0,T;H^s(\mathbb{R}^3))}=0.
\end{equation*}
Also, the electric currunt $j^c$ converges to $\overline{j}$ in $L^p(0,T;H^s(\mathbb{R}^3))$ for every $p \in [1,2)$, i.e. 
\begin{equation*}
    \lim_{c \to \infty}\|j^c-\overline{j}\|_{L^{p}(0,T;H^s(\mathbb{R}^3))}=0.
\end{equation*}
If we further assume that $\|E^c_0\|_{L^2(\mathbb{R}^3)} \to 0$, then
$E^c$ converges to zero in $L^{\infty}(0,T;H^s(\mathbb{R}^3))$ and $j^c$ converges to $\overline{j}$ in $L^{2}(0,T;H^s(\mathbb{R}^3))$.
\end{corollary}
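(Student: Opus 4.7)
The idea is straightforward interpolation: the $L^2$-level convergence results proved in Corollaries~\ref{cor_conv_uB} and \ref{cor_conv_nonconv} can be upgraded to $H^s$ for any $0<s<m$ by combining them with the uniform $H^m$-bounds furnished by the two local existence theorems of this section. First I would fix $T>0$ so small that both Theorem~\ref{thm_ext} (applied uniformly in $c\ge c_0$) and Theorem~\ref{thm_ext_inviscid_MHD} apply. On this interval the bounds \eqref{uniform_esti_2323} and \eqref{ass_eng_MHD} then automatically imply the hypotheses \eqref{ass_eng_EM} and \eqref{ass_eng_EM_2} of Theorem~\ref{thm_conv}, so that Corollaries~\ref{cor_conv_uB} and \ref{cor_conv_nonconv} are at our disposal.

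The key analytic tool is the Gagliardo–Nirenberg interpolation
\begin{equation*}
\|f\|_{H^s} \le C\,\|f\|_{L^2}^{1-s/m}\,\|f\|_{H^m}^{s/m},\qquad 0<s<m,
\end{equation*}
applied pointwise in time. For $u^c-\overline u$ and $B^c-\overline B$, the differences are uniformly bounded in $L^\infty_t H^m_x$ by $M+\overline M$, and converge to $0$ in $L^\infty_t L^2_x$ by \eqref{limit_ub}; plugging these into the interpolation inequality at each $t$ and taking the supremum immediately yields $L^\infty_t H^s_x$ convergence. For $E^c$, the pointwise interpolation gives
\begin{equation*}
\int_0^T \|E^c(t)\|_{H^s}^p\,\ud t \;\le\; C^p M^{ps/m}\int_0^T \|E^c(t)\|_{L^2}^{p(1-s/m)}\,\ud t,
\end{equation*}
and since $q:=p(1-s/m)$ is finite whenever $p<\infty$, the right-hand side vanishes by \eqref{limit_E} (using the trivial embedding $L^1_t L^2_x\hookrightarrow L^q_t L^2_x$ on the bounded interval $[0,T]$ when $q<1$). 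The $p=\infty$ case is handled identically once the hypothesis $\|E^c_0\|_{L^2}\to 0$ is used, which yields $L^\infty_t L^2_x$ convergence of $E^c$.

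The only slightly delicate step is the $j^c$ claim, because the relevant $H^m$-bound is in $L^2_t$, not $L^\infty_t$: from \eqref{uniform_esti_2323} we have $\|j^c\|_{L^2_t H^m_x}\le M$, and $\|\overline j\|_{L^2_t H^m_x}=\|\nabla\times\overline B\|_{L^2_t H^m_x}\le\overline M$. So the interpolation gives
\begin{equation*}
\int_0^T \|j^c-\overline j\|_{H^s}^p\,\ud t \;\le\; C^p\int_0^T \|j^c-\overline j\|_{L^2}^{p(1-s/m)}\,\|j^c-\overline j\|_{H^m}^{ps/m}\,\ud t,
\end{equation*}
to which I would apply Hölder in time with the conjugate exponents $2m/(ps)$ and $2m/(2m-ps)$ (both $\ge 1$ since $p<2<2m/s$). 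The $H^m$-factor is then controlled by $(M+\overline M)^{ps/m}$ from the $L^2_t H^m_x$ bound, and the $L^2$-factor becomes $\|j^c-\overline j\|_{L^q_t L^2_x}^{p(1-s/m)}$ with $q=2p(m-s)/(2m-ps)$. A quick algebraic check shows $q<2\iff p<2$, so Corollary~\ref{cor_conv_nonconv}(i) delivers the required vanishing and gives $j^c\to\overline j$ in $L^p_t H^s_x$ for every $p\in[1,2)$. The borderline case $p=2$ under the extra assumption $\|E^c_0\|_{L^2}\to0$ is analogous, using Corollary~\ref{cor_conv_nonconv}(ii) together with Hölder. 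The main (mild) obstacle is simply verifying this exponent bookkeeping; the rest is mechanical.
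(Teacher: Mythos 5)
Your proposal is correct and is exactly the route the paper intends: the paper omits the proof, remarking only that it follows from Gagliardo--Nirenberg interpolation combined with Corollary~\ref{cor_conv_uB} (and \ref{cor_conv_nonconv}) and the uniform bound \eqref{uniform_esti_2323}, which is precisely your argument. Your extra care with the $j^c$ term (Hölder in time against the $L^2_tH^m_x$ bound, with the exponent check $q<2\iff p<2$) fills in the bookkeeping the paper leaves implicit, and it checks out.
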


\section{Auxiliary linear system}\label{Auxiliary_linear_system} 
\begin{Notation} We use the following notation in this section.
\begin{itemize}
\item For any positive $A$ and $B$, we use the notation $A\lesssim B$ to mean that there exists a generic constant $C > 0$ such that $A \leq CB$. Moreover, $A \lesssim_{D_1,..,D_k}B$ is denoted when $A \leq C B$ and the constant $C$ depends on $D_1, ..., D_k$.
\end{itemize}
\end{Notation}

\subsection{Preliminaries}
Let $m > \frac{5}{2}$ and $(\overline{u},\, \overline{B})$ be a smooth solution to MHD system \eqref{eq-MHD} on some given time interval $[0,T]$ such that \eqref{ass_eng_MHD_0}-\eqref{ass_eng_MHD} holds. We recall the definition of $\overline{E}$ in \eqref{def_Ebar}. Given $\overline{E}$ and $E_0^c$, we consider the \textit{auxiliary linear system} associated $\eqref{main_eqn}_{2-3}$,
\begin{equation}\label{linear_H}
	\begin{cases}
	    \begin{aligned}
		&\frac{1}{c}\partial_t E_{L} - \nabla \times B_{L} + cE_{L} = \overline{E},\quad &\operatorname{div} E_L =0, \\
		&\frac{1}{c}\partial_t B_{L} + \nabla \times E_{L} = \frac{1}{c} \nabla \times \overline{E}, \quad &\operatorname{div} B_L = 0,\\
        &\qquad E_{L}\vert_{t=0}=E^c_0,\quad B\vert_{t=0}=0.
	\end{aligned}
	\end{cases}
\end{equation}
Note that the system \eqref{linear_H} depends on $c\geq c_0$ and thus so does its solution, but we denote the solution by $E_L$, $B_L$ rather $E_L^c$, $B_L^c$ for the readability. Since $E_0^c$ and $\overline{E}$ are divergence-free, the divergence-free condition is preserved over time. We claim that the solutions $cE_L$ and $B_L$ in \eqref{linear_H} converge to $\overline{E}$ and $0$ respectively and this observation is the key ingredient of our main theorem. 

To study \eqref{linear_H}, we should first discuss the existence of solution to such an linear system exists for each fixed $c\geq c_0$. For this, we let $\mathbf{B} := B_L + \overline{B}$ and consider an equivalent system to \eqref{linear_H}:
\begin{equation}\label{Linear_system}
	\begin{cases}
	    \begin{aligned}
		&\frac{1}{c}\partial_t E_{L} - \nabla \times \mathbf{B} + cE_{L} = \mathbb{P} (\overline{u} \times \overline{B}),\qquad &\operatorname{div} E_L =0, \\
		&\frac{1}{c}\partial_t \mathbf{B} + \nabla \times E_{L} = 0, \qquad &\operatorname{div} \mathbf{B} = 0,\\
        &E_{L}\vert_{t=0}=E^c_0,\quad \mathbf{B}\vert_{t=0}=\overline{B}\vert_{t=0}.
	\end{aligned}
	\end{cases}
\end{equation}
Note that
\begin{equation}\label{E_space}
    \overline{E} -\nabla \times \overline{B} = \mathbb{P} (\overline{u} \times \overline{B}) \in C([0,T];H^{m}(\mathbb{R}^3)).
\end{equation}

We provide a proposition regarding the existence of local-in-time solutions of \eqref{Linear_system}, thus of \eqref{linear_H}. Since one can prove it in the standard way, we omit the proof.
\begin{proposition} Let $m > \frac{5}{2}$ and $c \geq c_0$ be fixed. For any initial data $\phi_1 ,\, \phi_2 \in H^{m}(\bbR^3)$ with $\operatorname{div} \phi_1=\operatorname{div} \phi_2=0$, and any source term $f \in C([0,T];H^{m}(\mathbb{R}^3))$, there exists a unique solution $(E_L,\, \mathbf{B}) \in C([0,T] ; H^{m}(\bbR^3)) \cap C^1((0,T) ; H^{m-1}(\bbR^3))$ to the linear system
\begin{equation*}
	\begin{cases}
	    \begin{aligned}
		&\frac{1}{c}\partial_t E_{L} - \nabla \times \mathbf{B} + cE_{L} = \mathbb{P}f,\qquad &\operatorname{div} E_L =0, \\
		&\frac{1}{c}\partial_t \mathbf{B} + \nabla \times E_{L} = 0, \qquad &\operatorname{div} \mathbf{B} = 0,\\
        &E_{L}\vert_{t=0}= \phi_1,\quad \mathbf{B}\vert_{t=0}= \phi_2,
	\end{aligned}
	\end{cases}
\end{equation*} with the a priori bound: \begin{equation}\label{bdd_linear}
\begin{gathered}
    \sup_{t \in [0,T]} \left(\| E_L(t) \|_{H^m}^2 + \| B_L(t) \|_{H^m}^2 \right) + \int_0^T \| cE_L(t) \|_{H^m}^2 \,\ud t \leq \| \phi_1 \|_{H^m}^2 + \| \phi_2 \|_{H^m}^2 + \int_0^T \| f(t) \|_{H^m}^2 \,\ud t.
\end{gathered}
\end{equation}
\end{proposition}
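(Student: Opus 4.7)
The plan is to prove the a priori bound \eqref{bdd_linear} by a direct energy computation, establish existence through semigroup theory (or equivalently Galerkin approximation), and obtain uniqueness as an immediate consequence of the same bound applied to the difference of two candidate solutions.

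For the a priori estimate, I would take $L^2$ inner products of the first equation with $E_L$ and of the second with $\mathbf{B}$, then add them. The cross terms $-\langle\nabla\times\mathbf{B},\,E_L\rangle$ and $\langle\nabla\times E_L,\,\mathbf{B}\rangle$ cancel by integration by parts, yielding
\begin{equation*}
\frac{1}{2c}\frac{d}{dt}\left(\|E_L\|_{L^2}^2 + \|\mathbf{B}\|_{L^2}^2\right) + c\|E_L\|_{L^2}^2 = \langle \mathbb{P}f,\, E_L\rangle_{L^2}.
\end{equation*}
Multiplying by $2c$ and applying Young's inequality in the form $2c\,|\langle\mathbb{P}f,\,E_L\rangle| \leq \|f\|_{L^2}^2 + c^2\|E_L\|_{L^2}^2$ gives
\begin{equation*}
\frac{d}{dt}\left(\|E_L\|_{L^2}^2 + \|\mathbf{B}\|_{L^2}^2\right) + \|cE_L\|_{L^2}^2 \leq \|f\|_{L^2}^2,
\end{equation*}
which integrates to the $L^2$ version of \eqref{bdd_linear}. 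Since $\partial_t$, $\nabla\times$, and $\mathbb{P}$ all commute with constant-coefficient spatial derivatives $\partial^\alpha$, the same identity holds after applying $\partial^\alpha$ to each term; summing over $|\alpha|\leq m$ and integrating in time produces \eqref{bdd_linear} in full.

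For existence, I would rewrite the system as $\partial_t U = (cA - c^2 P)U + c\,(\mathbb{P}f,\,0)$ on the Hilbert space $\calH$ of pairs of divergence-free $L^2$ vector fields, where $A(E,\mathbf{B}) := (\nabla\times\mathbf{B},\,-\nabla\times E)$ is skew-adjoint on its natural domain (exactly by the integration by parts used above), and $P(E,\mathbf{B}) := (E,\,0)$ is bounded, self-adjoint, and nonnegative. Since $\mathrm{Re}\,\langle (cA - c^2P)U,\,U\rangle_{\calH} = -c^2\|E\|_{L^2}^2 \leq 0$, the Lumer--Phillips theorem furnishes a $C_0$ contraction semigroup $\{S(t)\}_{t\geq 0}$ on $\calH$. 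The $H^m$-level energy inequality above shows that $S(t)$ preserves the divergence-free $H^m$ subspace, and Duhamel's formula
\begin{equation*}
\begin{pmatrix} E_L(t)\\ \mathbf{B}(t)\end{pmatrix} = S(t)\begin{pmatrix}\phi_1\\ \phi_2\end{pmatrix} + c\int_0^t S(t-s)\begin{pmatrix}\mathbb{P}f(s)\\ 0\end{pmatrix}\,\ud s,
\end{equation*}
combined with $f \in C([0,T];H^m)$, produces a solution in $C([0,T];H^m)$. The equation itself then yields $\partial_t(E_L,\mathbf{B}) \in C([0,T];H^{m-1})$, i.e., the stated $C^1$ regularity. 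The divergence-free conditions are preserved because $\operatorname{div} E_L$ satisfies $\partial_t\operatorname{div} E_L + c^2\operatorname{div} E_L = 0$ with zero initial data (using $\operatorname{div}\mathbb{P}f = 0$), while $\operatorname{div}\mathbf{B}$ is conserved and also vanishes initially.

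Uniqueness is then immediate: the difference of two solutions satisfies the same linear system with zero data and zero forcing, and hence vanishes by the $L^2$ version of \eqref{bdd_linear}. No step presents a genuine obstacle; the only item requiring care is the bookkeeping of the powers of $c$ in the Young inequality, arranged so that the dissipation term recovers exactly $\|cE_L\|_{L^2}^2$ on the left-hand side while the right-hand side remains $\|f\|_{L^2}^2$ independently of $c$. Everything else reduces to standard linear semigroup theory.
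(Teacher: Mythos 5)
Your proposal is correct and is precisely the ``standard'' argument the paper alludes to when it omits the proof of this proposition: an $H^m$ energy identity (cross terms cancelling, Young's inequality tuned so the dissipation is exactly $\|cE_L\|_{H^m}^2$ and the source contributes $\|f\|_{H^m}^2$), existence by semigroup/Duhamel on the divergence-free subspace, and uniqueness by linearity. Nothing in your argument conflicts with the paper, so no further comparison is needed.
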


Next, we introduce useful lemma which presents the estimate of $\partial_t\overline{E}$.

\begin{lemma}\label{lem_useful}
    For $m \geq 3$, let $(\overline{u},\,\overline{B})$ be a solution of MHD system \eqref{eq-MHD} satisfying \eqref{ass_eng_EM}-\eqref{ass_eng_MHD2016}, and we consider $\overline{E} \in C([0,T];H^{m-1}(\mathbb{R}^3))\cap L^2([0,T];H^m(\mathbb{R}^3))$ defined by \eqref{def_Ebar}. 
    Then, there exists a constant $C>0$ depending on $m$ and $\overline{M}$ such that
\begin{equation}\label{rdt_E}
        \| \partial_t \overline{E} \|_{L^{\infty}(0,T;H^{m-3})} + \| \partial_t \overline{E} \|_{L^2(0,T;H^{m-2})} \leq C.
    \end{equation}
In particular, there exists a constant $\tilde{C}$, depending only on $\overline{M}$ such that
\begin{equation}\label{rdt_E_110}
     \| \partial_t \overline{E} \|_{L^{\infty}(0,T;L^2)} + \| \partial_t \overline{E} \|_{L^2(0,T;H^1)} \leq \tilde{C}.
\end{equation}
\end{lemma}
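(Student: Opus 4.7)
The idea is to differentiate the explicit formula $\overline{E}=\bbP(\nabla\times\overline{B}-\overline{u}\times\overline{B})$ in time and then substitute the MHD evolution equations for $\partial_t\overline{u}$ and $\partial_t\overline{B}$, reducing the problem to a Sobolev bookkeeping exercise on quantities that are already controlled by the assumed bound $\overline{M}$ in \eqref{ass_eng_MHD2016}.

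\textbf{Step 1: expand $\partial_t\overline{E}$.} Writing
\begin{equation*}
\partial_t \overline{E} = \bbP\bigl(\nabla\times\partial_t\overline{B}\bigr) - \bbP\bigl(\partial_t\overline{u}\times\overline{B}+\overline{u}\times\partial_t\overline{B}\bigr)
\end{equation*}
and substituting from \eqref{eq-MHD},
\begin{equation*}
\partial_t\overline{B}=\Delta\overline{B}+\nabla\times(\overline{u}\times\overline{B}),\qquad \partial_t\overline{u}=-\bbP\bigl((\overline{u}\cdot\nabla)\overline{u}\bigr)+\bbP\bigl((\nabla\times\overline{B})\times\overline{B}\bigr),
\end{equation*}
I obtain an expression for $\partial_t\overline{E}$ whose top-order piece is $\bbP(\nabla\times\Delta\overline{B})$, i.e.\ third order in $\overline{B}$, while all remaining pieces are either first order in $\overline{u}$ or involve products of terms at or below order two in $\overline{B}$.

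\textbf{Step 2: Sobolev estimates term by term.} Since $\bbP$ is bounded on every $H^s$, it is harmless. For the $L^\infty_tH^{m-3}$ bound I use that $H^{m-3}$ with $m\geq 3$ can absorb three derivatives off $\overline{B}\in H^m$, so $\|\nabla\times\Delta\overline{B}\|_{H^{m-3}}\lesssim \|\overline{B}\|_{H^m}$; the nonlinear remainder terms are estimated by the algebra property of $H^s$ for $s>3/2$ (applied at level $m-1$ or higher) together with $\|\overline{u}\|_{H^m}+\|\overline{B}\|_{H^m}\leq\overline{M}$. For the $L^2_tH^{m-2}$ bound, the crucial observation is that the top-order piece satisfies
\begin{equation*}
\|\nabla\times\Delta\overline{B}\|_{L^2(0,T;H^{m-2})}\lesssim \|\nabla\overline{B}\|_{L^2(0,T;H^{m})}\leq\overline{M},
\end{equation*}
which is precisely what \eqref{ass_eng_MHD2016} provides. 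The lower-order terms are handled by the $L^\infty_tH^{m-1}$ bound on $\partial_t\overline{u}$ (which loses one derivative from the MHD equation) and on $\nabla\times(\overline{u}\times\overline{B})$, combined with the algebra property.

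\textbf{Step 3: the quantitative case \eqref{rdt_E_110}.} Specializing to $m=3$ gives $H^{m-3}=L^2$ and $H^{m-2}=H^1$, so \eqref{rdt_E_110} is just a particular instance of \eqref{rdt_E}. Equivalently, one can re-run the bookkeeping above at the explicit regularity level $L^2$ and $H^1$, relying only on $\|\overline{u}\|_{L^\infty_tH^3}+\|\overline{B}\|_{L^\infty_tH^3}+\|\nabla\overline{B}\|_{L^2_tH^3}\lesssim \overline{M}$.

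\textbf{Expected main obstacle.} The only subtle point is the third-order term $\nabla\times\Delta\overline{B}$: it is not in $L^\infty_tH^{m-2}$, only in $L^\infty_tH^{m-3}$, which explains the apparent drop of one derivative in the $L^\infty_t$ norm in \eqref{rdt_E}. The matching $L^2_tH^{m-2}$ bound is recovered precisely because $\nabla\overline{B}$ picks up one extra derivative in $L^2_t$ thanks to the parabolic regularisation of the induction equation encoded in \eqref{ass_eng_MHD2016}. Everything else is routine product estimates and embeddings.
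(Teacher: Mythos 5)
Your proposal is correct and follows essentially the same route as the paper: differentiate $\overline{E}=\bbP(\nabla\times\overline{B}-\overline{u}\times\overline{B})$ in time, substitute the MHD equations, and close with standard product/interpolation estimates, with the top-order term controlled in $L^\infty_t$ only at level $H^{m-3}$ and in $L^2_tH^{m-2}$ via $\|\nabla\overline{B}\|_{L^2_tH^m}\leq\overline{M}$. The only cosmetic difference is that the paper estimates $\partial_t\overline{u}$ and $\partial_t\overline{B}$ as blocks (bounding their $H^{m-1}$, resp.\ $H^{m-2}$ and $L^2_tH^{m-1}$, norms from the equations) instead of fully expanding, which is not a substantive deviation.
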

\begin{proof}
    We estimate $\| \partial_t \overline{E} \|_{L^{\infty}(0,T;H^{m-3})}$ first. We recall \eqref{def_Ebar} and have
\begin{equation*}
\begin{aligned}
        \| \partial_t \overline{E} \|_{H^{m-3}} &= \| \partial_t \bbP(\nabla \times \overline{B}-\overline{u} \times \overline{B})  \|_{H^{m-3}} \\
        &\leq \| \nabla \partial_t \overline{B} \|_{H^{m-3}} + \|  \partial_t \overline{u} \|_{\dot{H}^{m-3}} \| \overline{B} \|_{L^{\infty}} + \| \overline{u} \|_{L^{\infty}} \| \partial_t \overline{B} \|_{\dot{H}^{m-3}}.
\end{aligned}
\end{equation*}
From the first equation of \eqref{eq-MHD}, it follows \begin{equation}\label{ubar_est}
     \|  \partial_t \overline{u} \|_{\dot{H}^{m-3}} \leq  \| \partial_t \overline{u} \|_{H^{m-1}} \leq \| (\overline{u} \cdot \nabla) \overline{u} \|_{H^{m-1}} + \| (\nabla \times \overline{B}) \times \overline{B} \|_{H^{m-1}} \leq 2 \overline{M}^2.
\end{equation} We have from the second equation of \eqref{eq-MHD} \begin{equation}\label{Bbar_est1}
    \| \partial_t \overline{B} \|_{\dot{H}^{m-3}} \leq \| \partial_t \overline{B} \|_{H^{m-2}} \leq \| \Delta \overline{B} \|_{H^{m-2}} + \| \overline{u} \times \overline{B} \|_{H^{m-1}}  \leq \overline{M} + \overline{M}^2 .
\end{equation}
Thus, we obtain \begin{equation}\label{def_C1}
    \| \partial_t \overline{E} \|_{L^\infty(0,T;H^{m-3})} \leq  \overline{M} +2\overline{M}^2+ 3\overline{M}^3. 
\end{equation}
    
Next, we estimate $\| \partial_t \overline{E} \|_{L^2(0,T;H^{m-2})}$. It follows from \eqref{def_Ebar} that
\begin{equation}\label{rdtE_1}
\begin{aligned}
    \| \partial_t \overline{E} \|_{L^2(0,T;H^{m-2})} \leq &\| \nabla \partial_t \overline{B} \|_{L^2(0,T;H^{m-2})} + \| \partial_t \overline{u} \times \overline{B} \|_{L^2(0,T;H^{m-2})} + \| \overline{u} \times \partial_t \overline{B} \|_{L^2(0,T;H^{m-2})}.
\end{aligned}
\end{equation}
Note that there exists $\theta \in (0,1)$ such that \begin{equation*}
    \|\overline{B}\|_{L^{\infty}} \leq C(m)\|\nabla \overline{B}\|_{L^2}^{\theta}\|\nabla^{m+1} \overline{B}\|_{L^2}^{1-\theta},
\end{equation*}
and this leads to
\begin{equation}\label{eq1342tue}
    \|\overline{B}\|_{L^2(0,T;L^{\infty})} \leq C \, \|\nabla \overline{B}\|_{L^2(0,T;H^m)} \leq C\overline{M}.
\end{equation}
Using \eqref{ubar_est} and \eqref{eq1342tue}, we can see that 
\begin{equation*}\label{rdtE_1}
\begin{aligned}
    \| \partial_t \overline{u} \times \overline{B} \|_{H^{m-2}}&\lesssim \|\nabla^{m-2} (\partial_t \overline{u} \times \overline{B}) \|_{L^2} + \| \partial_t \overline{u} \times \overline{B} \|_{L^2}\\ 
    &\lesssim \| \nabla ^{m-2} \partial_t \overline{u} \|_{L^2} \| \overline{B} \|_{L^{\infty}} + \|  \partial_t \overline{u} \|_{L^{\infty}} \| \nabla ^{m-2}\overline{B} \|_{L^{2}} + \|  \partial_t \overline{u} \|_{L^2} \| \overline{B} \|_{L^{\infty}} 
\end{aligned}
    \end{equation*}
and
\begin{equation*}\label{rdtE_2}
    \| \partial_t \overline{u} \times \overline{B} \|_{L^2(0,T;H^{m-2})} \lesssim  \|  \partial_t \overline{u} \|_{L^{\infty}(0,T;H^{m})} \| \nabla \overline{B} \|_{L^2(0,T;H^{m})} \lesssim \overline{M}^3
\end{equation*}
Similarly, it follows from \eqref{Bbar_est1} that \begin{equation*}\label{rdtE_3}
    \begin{aligned}
        \| \overline{u} \times \partial_t \overline{B} \|_{L^2(0,T;H^{m-2})} &\leq \| \| \overline{u} \|_{H^{m-2}} \| \partial_t \overline{B} \|_{L^\infty} \|_{L^2(0,T)} + \| \| \overline{u} \|_{L^\infty}\| \partial_t \overline{B} \|_{H^{m-2}} \|_{L^2(0,T)} \\
        &\leq C \|  \overline{u} \|_{L^{\infty}(0,T;H^{m-1})} \| \partial_t \overline{B} \|_{L^2(0,T;H^{m-1})}\\
        &\leq C \overline{M} \| \partial_t \overline{B} \|_{L^2(0,T;H^{m-1})}.
    \end{aligned}
    \end{equation*} On the other hand, \begin{align*}
        \| \partial_t \overline{B} \|_{L^2(0,T;H^{m-1})} &\leq \| \Delta \overline{B} \|_{L^2(0,T;H^{m-1})} + \| \overline{u} \times \overline{B} \|_{L^2(0,T;H^{m})} \\
        &\leq \|\nabla \overline{B}\|_{L^2(0,T;H^m)} +  \|  \overline{u} \|_{L^{\infty}(0,T;H^{m})} \| \partial_t \overline{B} \|_{L^2(0,T;H^{m})}\\
        & \leq \overline{M} + \overline{M}^2.
    \end{align*}
Combining these, we have
\begin{equation*}
    \| \partial_t \overline{E} \|_{L^2(0,T;H^{m-2})} \lesssim_m \overline{M} + 2\overline{M}^2 + 2\overline{M}^3.
\end{equation*}
The proof is completed.
\end{proof}
    

\subsection{Linear parts estimates}
We have shown that a solution $(E_L,B_L)$ to the auxiliary linear system \eqref{linear_H} exists for each $c\geq c_0$. We shall establish estimates that depend on $c$. This is a crucial step in our analysis, as we obtain detailed information during this process.
In the following proposition, we fix $m=3$ temporarily. Indeed, for the higher order derivatives, Proposition \ref{prop_Lin} is optimal in $c$ and this shall be discussed in Appendix.
\begin{proposition}[Estimates of the auxiliary linear system]\label{prop_Lin}
For $T>0$ and $p \in [1,\infty]$, let $(\overline{u},\, \overline{B})$ be a solution to MHD system \eqref{eq-MHD} satisfying \eqref{ass_eng_EM}-\eqref{ass_eng_MHD2016}. Recalling that $\overline{E}$ in \eqref{def_Ebar} satisfy \eqref{E_space}, let $(E_L,\, B_L)$ be the solution of auxiliary linear system \eqref{linear_H}. Then, the followings hold:
\begin{equation}\label{est_BL}
    \begin{gathered}
        \sup_{t \in [0,T]} \left(\int_{\mathbb{R}^3} |B_L(t)|^2 \,\ud x\right)^{\frac{1}{2}} + \left(\int_0^T\int_{\mathbb{R}^3} |\nabla B_L|^2 \,\ud x \ud t\right)^{\frac{1}{2}}  \lesssim_{T,\overline{M}} \, c^{-2} \|cE_0^c - \overline{E}(0)\|_{H^1} + c^{-2}
    \end{gathered}
    \end{equation} and \begin{equation}\label{est_EL}
        \left\| cE_L(t) - \overline{E}(t) - e^{-c^2t} (cE_0^c - \overline{E}(0)) \right\|_{L^p(0,T;L^2)} \lesssim_{T,\overline{M}} \, (1+c^{1-\frac{2}{p}})(c^{-2} \|cE_0^c - \overline{E}(0)\|_{H^1} + c^{-2})
    \end{equation} for all $c \geq c_0$.
\end{proposition}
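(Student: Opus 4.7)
The plan is to reduce the linear system \eqref{linear_H} to a single damped wave equation for $B_L$, carry out a two-step energy estimate, and then recover $cE_L-\overline{E}$ via an ODE Duhamel formula. Introduce $F:=cE_L-\overline{E}$. Multiplying the first equation of \eqref{linear_H} by $c$ and combining with the second, one finds
\[
\partial_t F + c^2 F \;=\; c^2\nabla\times B_L - \partial_t\overline{E}, \qquad \partial_t B_L \;=\; -\nabla\times F,
\]
with $F(0)=cE_0^c-\overline{E}(0)$ and $B_L(0)=0$. Taking $\partial_t$ of the second relation, substituting the first, and using $\nabla\times\nabla\times B_L=-\Delta B_L$ (since $\operatorname{div} B_L=0$), one obtains the damped wave equation
\[
c^{-2}\partial_t^2 B_L + \partial_t B_L - \Delta B_L \;=\; c^{-2}\nabla\times\partial_t\overline{E},\qquad B_L(0)=0,\ \partial_t B_L(0)=-\nabla\times\bigl(cE_0^c-\overline{E}(0)\bigr).
\]

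To prove \eqref{est_BL}, I perform two successive energy estimates on this damped wave equation. Testing with $\partial_t B_L$ yields the hyperbolic estimate
\[
c^{-2}\|\partial_t B_L(t)\|_{L^2}^2 + \|\nabla B_L(t)\|_{L^2}^2 + \int_0^t\|\partial_t B_L\|_{L^2}^2\,\ud s \;\lesssim\; c^{-2}\|cE_0^c-\overline{E}(0)\|_{H^1}^2 + c^{-4}\|\partial_t\overline{E}\|_{L^2(0,T;H^1)}^2,
\]
which, by Lemma~\ref{lem_useful}, only yields a $c^{-1}$-decay rate for $\|\nabla B_L\|_{L^\infty L^2}$ and $\|\partial_t B_L\|_{L^2L^2}$. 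To upgrade to $c^{-2}$, I test a second time against $B_L$ itself and use $\langle\partial_t^2 B_L,B_L\rangle=\frac{\ud}{\ud t}\langle\partial_t B_L,B_L\rangle-\|\partial_t B_L\|^2$. After integration in time (with $B_L(0)=0$) and Young's absorption of the cross term $c^{-2}\langle\partial_t B_L(t),B_L(t)\rangle$ into $\tfrac14\|B_L(t)\|^2$, the residue $c^{-4}\|\partial_t B_L(t)\|_{L^2}^2$ together with $c^{-2}\|\partial_t B_L\|_{L^2L^2}^2$ and the forcing $c^{-2}\int\langle\nabla\times\partial_t\overline{E},B_L\rangle\,\ud s$ are all controlled to order $c^{-4}$ by the first step and Lemma~\ref{lem_useful}, giving \eqref{est_BL}.

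The bound \eqref{est_EL} then follows from the ODE Duhamel formula
\[
F(t)-e^{-c^2 t}F(0) \;=\; \int_0^t e^{-c^2(t-s)}\bigl(c^2\nabla\times B_L(s) - \partial_t\overline{E}(s)\bigr)\,\ud s.
\]
With the kernel $K(t):=e^{-c^2t}\mathbf{1}_{t\ge0}$ satisfying $\|K\|_{L^r(0,T)}\leq (rc^2)^{-1/r}\lesssim c^{-2/r}$, I apply Young's convolution inequality $\|K*g\|_{L^p_tL^2_x}\leq\|K\|_{L^r_t}\|g\|_{L^q_tL^2_x}$ (with $1/r+1/q=1+1/p$) using $(r,q)=(1,p)$ for $p\in[1,2]$ and $(r,q)=(\tfrac{2p}{p+2},2)$ for $p\in[2,\infty]$. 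Combined with the bound \eqref{est_BL} on $\nabla B_L$, which makes $c^2\nabla B_L$ uniformly bounded in $L^2_tL^2_x$, and Lemma~\ref{lem_useful} on $\partial_t\overline{E}$, this yields exactly the right-hand side of \eqref{est_EL}.

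\emph{The main obstacle} is obtaining the sharp $c^{-2}$ rate in \eqref{est_BL}: a direct energy estimate on the first-order system \eqref{linear_H} only produces $\|B_L\|_{L^\infty L^2}=O(c^{-1})$, and only the two-step bootstrap via the damped wave formulation delivers the improved $c^{-2}$ scaling, which in turn is essential for the $p=1$ endpoint of \eqref{est_EL}. Intuitively, this reflects the fact that in the singular limit $c\to\infty$ the damped wave equation formally degenerates to the heat equation, whose unique solution with vanishing initial data is identically zero.
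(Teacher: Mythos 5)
Your proposal is correct and takes essentially the same route as the paper: via $\partial_t B_L=-\nabla\times(cE_L-\overline{E})$, your two multipliers $\partial_t B_L$ and $B_L$ for the damped wave equation reproduce exactly the paper's curl--curl estimate \eqref{sun0818_1638} and the cross-term identity obtained by summing \eqref{zerg1}--\eqref{zerg3}, and your Duhamel plus Young-convolution argument for \eqref{est_EL}, with the same exponent split at $p=2$, coincides with the paper's Step 2. The second-order damped-wave repackaging is only cosmetic.
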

\begin{remark}
In particular, for $p=1$, it follows from \eqref{est_EL} that
\begin{equation}\label{EL_L1L2_estimate}
\int_0^T\|cE_L(t)-\overline{E}(t)\|_{L^2}\,\dt \lesssim_{T,\overline{M}} \frac{1}{c^2} \left(\|cE_0^c - \overline{E}(0)\|_{H^1}+1\right).
\end{equation}
\end{remark}

\begin{proof}[Proof of Proposition~\ref{prop_Lin}]
    Let $(E_L,\,B_L) \in C([0,T];H^m(\bbR^3))$ be the solution to \eqref{linear_H}. Then, it holds \begin{equation}\label{diff_E_eq}
        \frac{1}{c^2}\partial_t (cE_{L} - \overline{E}) - \nabla \times B_{L} + (cE_L - \overline{E}) = -\frac{1}{c^2} \partial_t \overline{E}
    \end{equation} and \begin{equation}\label{diff_B_eq}
        \partial_t B_{L} + \nabla \times (cE_{L} - \overline{E}) = 0
    \end{equation} on time interval $[0,T]$. Let us briefly introduce the idea as follows. We formally let $c$ tend to infinity in \eqref{diff_E_eq} and substitute the result into \eqref{diff_B_eq}. Then, we obtain the heat equation $\partial_t B_L - \Delta B_L=0$ with $B_L(0)=0$ formally at $c = \infty$. Considering the nature of this limit equation, we derive estimates by constructing parabolic estimates for the auxiliary linear system \eqref{linear_H}.
 \medskip \\
{\bf{Step 1} (Parabolic estimate for $B_L$)}  We first claim that
\begin{equation*}\label{est_rough_3}
\begin{gathered}
    \frac{1}{c^2} \sup_{t \in [0,T]} \int |\nabla \times (cE_L - \overline{E})|^2 \,\ud x + \sup_{t \in [0,T]} \int |\nabla \times B_L|^2 \,\ud x + \int_0^T \int |\nabla \times (cE_L - \overline{E})|^2 \,\ud x \ud t \\
    \leq \frac{1}{c^4}\int_0^T \int |\nabla \partial_t \overline{E}|^2 \,\ud x \ud t + \frac{1}{c^2} \int |\nabla(c E_0^c - \overline{E}(0))|^2 \,\ud x.
\end{gathered}
\end{equation*} for all $c \geq c_0$. By testing $\nabla \times \nabla \times (cE_L-\overline{E})$ and $\nabla \times \nabla \times B_L$ to \eqref{diff_E_eq}-\eqref{diff_B_eq}, we see \begin{equation*}
\begin{gathered}
    \frac{1}{2} \frac{\ud}{\ud t} \left( \frac{1}{c^2}\int |\nabla \times (cE_L - \overline{E})|^2 \,\ud x + \int |\nabla \times B_L|^2 \,\ud x \right) + \int |\nabla \times (cE_L - \overline{E})|^2 \,\ud x \\
    = -\frac{1}{c^2} \int \nabla \times \partial_t \overline{E} \cdot \nabla \times (cE_L - \overline{E}) \,\ud x\\
 \leq \frac{1}{2} \int |\nabla \times (cE_L - \overline{E}) |^2 \,\ud x + \frac{1}{2c^4} \int | \nabla \partial_t \overline{E} |^2 \,\ud x.
\end{gathered}
\end{equation*}
Integrating over time and using the divergence free condition, it follows that
\begin{equation}\label{sun0818_1638}
    \begin{gathered}
    \sup_{t \in [0,T]}\frac{1}{c^2} \int |\nabla(cE_L - \overline{E})(t)|^2 \,\ud x + 
    \sup_{t \in [0,T]}\int |\nabla B_L(t)|^2 \,\ud x + \int_0^T \int |\nabla(cE_L(t) - \overline{E}(t))|^2 \,\ud x \ud t \\
    \leq \frac{1}{c^2} \int |\nabla(cE_L - \overline{E})(0)|^2 \,\ud x + \frac{1}{c^4}\int_0^T\int | \nabla \partial_t \overline{E} |^2 \,\ud x \ud t.
    \end{gathered}
\end{equation}
Moreover, it follows from \eqref{diff_E_eq} that
\begin{equation}\label{zerg1}
    -\frac{1}{c^2} \int \partial_t(cE_L - \overline{E}) \cdot \nabla \times B_L \,\ud x + \int |\nabla B_L|^2 \,\ud x - \int (cE_L - \overline{E}) \cdot \nabla \times B_L \,\ud x  = \frac{1}{c^2} \int \partial_t \overline{E} \cdot \nabla \times B_L \,\ud x.
\end{equation} On the other hand, it follows from \eqref{diff_B_eq} that \begin{equation}\label{zerg2}
    \frac{1}{2} \frac{\ud}{\ud t} \int |B_L|^2 \,\ud x + \int \nabla \times (cE_L - \overline{E}) \cdot B_L \,\ud x = 0,
\end{equation} \begin{equation}\label{zerg3}
    -\frac{1}{c^2} \int \nabla \times (\partial_t B_L) \cdot (cE_L - \overline{E}) \,\ud x - \frac{1}{c^2} \int |\nabla \times (cE_L - \overline{E})|^2 \,\ud x = 0.
\end{equation} Summing \eqref{zerg1}-\eqref{zerg3}, one has \begin{equation*}
    \begin{gathered}
        -\frac{1}{c^2} \frac{\ud}{\ud t}\int \nabla \times B_L \cdot (cE_L - \overline{E}) \,\ud x + \frac{1}{2} \frac{\ud}{\ud t} \int |B_L|^2 \,\ud x + \int |\nabla B_L|^2 \,\ud x \\
        = \frac{1}{c^2} \int \partial_t \overline{E} \cdot \nabla \times B_L \,\ud x + \frac{1}{c^2} \int |\nabla (cE_L - \overline{E})|^2 \,\ud x.
    \end{gathered}
\end{equation*} Integrating over time, we have 
\begin{equation*}
    \begin{gathered}
        \frac{1}{2} \int |B_L(t)|^2 \,\ud x + \int_0^t \int |\nabla B_L|^2 \,\ud x \ud t \\
        = \frac{1}{c^2} \int_0^t \int \partial_t \overline{E} \cdot \nabla \times B_L \,\ud x \ud t + \frac{1}{c^2} \int_0^t \int |\nabla (cE_L - \overline{E})|^2 \,\ud x \ud t + \frac{1}{c^2} \int  B_L(t) \cdot \nabla \times (cE_L - \overline{E})(t) \,\ud x.
    \end{gathered}
\end{equation*} 
By Young's inequality and Lemma \ref{lem_useful}, we deduce that
\begin{equation}\label{eq0121sun}
    \begin{aligned}
        &\frac{1}{4} \int |B_L(t)|^2 \,\ud x + \frac{1}{2} \int_0^t \int |\nabla B_L|^2 \,\ud x \ud t \\
        &\hphantom{\qquad} \leq \frac{1}{2c^4} \int_0^t \int |\partial_t \overline{E}|^2 \,\ud x \ud t + \frac{1}{c^2} \int_0^t \int |\nabla (cE_L - \overline{E})|^2 \,\ud x \ud t + \frac{1}{c^4} \int |\nabla(cE_L - \overline{E})(t)|^2 \,\ud x.
    \end{aligned}
\end{equation}
Combining \eqref{eq0121sun} and \eqref{sun0818_1638} with \eqref{rdt_E_110}, we have
\begin{equation}\label{H_est}
    \begin{gathered}
    \sup_{t \in [0,T]} \int |B_L|^2 \,\ud x + \int_0^T \int |\nabla B_L|^2 \,\ud x \ud t + \frac{1}{c^2}\sup_{t \in [0,T]} \int |\nabla B_L|^2 \,\ud x \\
    \leq 8 \left( \frac{1}{c^4} \int_0^T \int |\partial_t \overline{E}|^2 \,\ud x \ud t + \frac{1}{c^6}\int_0^T\int | \nabla \partial_t \overline{E} |^2 \,\ud x \ud t + \frac{1}{c^4} \int |\nabla(cE_0^c - \overline{E}(0))|^2 \,\ud x \right)\\
    \leq C\left(\frac{1}{c^4}+\frac{1}{c^4}\int |\nabla(cE_0^c - \overline{E}(0))|^2 \,\ud x \right).
\end{gathered}
\end{equation}
Here, the constant $C$ only depends on $\overline{M}$ due to \eqref{rdt_E_110}.
Now, we combine \eqref{sun0818_1638} together with \eqref{H_est} and employ \eqref{rdt_E_110} again to obtain \eqref{est_BL}.

{\bf{Step 2 (Estimates for $E_L - \overline{E}$)}} We show \eqref{est_EL}. By applying Duhamel's principle to the first equation of \eqref{linear_H}, we have \begin{equation*}
\begin{gathered}
    \left\| cE_L(t) - \overline{E}(t) - e^{-c^2t} (cE_0^c - \overline{E}(0)) \right\|_{L^2} \leq c^2\int_0^t e^{-c^2(t-\tau)} \| \nabla B_L(\tau) \|_{L^2} \,\ud \tau + \int_0^t e^{-c^2(t-\tau)}\|\partial_{t}\overline{E}(\tau)\|_{L^2} \,\ud \tau\\
    \leq \left((c^2e^{-c^2(\cdot)})*\| \nabla B_L(\cdot) \|_{L^2}\right)(t) +\left((e^{-c^2(\cdot)})*\| \partial_t \overline{E}(\cdot) \|_{L^2}\right)(t) 
\end{gathered}
\end{equation*}
Let $p \in [1,\infty]$. Then, taking $L^p$-norm over the time-interval $[0,T]$ yields \begin{equation}\label{N1N2}
\begin{gathered}
    \left\| cE_L(t) - \overline{E}(t) - e^{-c^2t} (cE_0^c - \overline{E}(0)) \right\|_{L^p(0,T;L^2)} \\
   \leq  \left\| (c^2e^{-c^2(\cdot)})*\| \nabla B_L(\cdot) \|_{L^2} \right\|_{L^p(0,T)} + \left\| (e^{-c^2(\cdot)})*\| \partial_t \overline{E}(\cdot) \|_{L^2} \right\|_{L^p(0,T)}\\
   =: N_1 + N_2.
    \end{gathered}
\end{equation}
To estimate $N_1$, we divide the case and we use Young's convolution inequality to obtain that
\begin{align*}
    N_1 & \leq
    \begin{cases}
        c^2 \| e^{-c^2t} \|_{L^1(0,T)} \| \nabla B_L \|_{L^p(0,T;L^2)}\quad (\text{if }1\leq p \leq2),\\
        c^2 \| e^{-c^2t} \|_{L^{\frac{2p}{p+2}}(0,T)} \| \nabla B_L \|_{L^2(0,T;L^2)}\quad (\text{if }2< p \leq \infty).\\
    \end{cases}
\end{align*}
In the case of $p=\infty$, we interpret the value of $\frac{2p}{p+2}$ as $2$. If $1 \leq p \leq 2$, then we have \begin{align*}
    c^2 \| e^{-c^2t} \|_{L^1(0,T)} \| \nabla B_L \|_{L^p(0,T;L^2)}&\leq \| \nabla B_L \|_{L^p(0,T;L^2)} \leq T^{\frac{1}{p}-\frac{1}{2}} \| \nabla B_L \|_{L^2(0,T;L^2)}.
\end{align*}
If $2<p \leq \infty$, it similarly follows that \begin{equation*}
    c^2 \| e^{-c^2t} \|_{L^{\frac{2p}{p+2}}(0,T)} \| \nabla B_L \|_{L^2(0,T;L^2)}\leq c^{{1-\frac{2}{p}}}\| \nabla B_L \|_{L^2(0,T;L^2)} .
\end{equation*}
Hence, we bound $N_1$ by \begin{equation}\label{est_up1}
    N_1 \leq T^{\frac{1}{p}-\frac{1}{2}} \| \nabla B_L \|_{L^2(0,T;L^2)} + c^{{1-\frac{2}{p}}}\| \nabla B_L \|_{L^2(0,T;L^2)} .
\end{equation}
On the other hand, we have the following upper bound $N_2$ \begin{equation}\label{est_up2}
   N_2 \leq \| e^{-c^2t}\|_{L^1(0,T)} \| \partial_t \overline{E} \|_{L^p(0,T;L^2)} \leq c^{-2} \| \partial_t \overline{E} \|_{L^p(0,T;L^2)}.
\end{equation}
From \eqref{N1N2}, \eqref{est_up1} and \eqref{est_up2}, one has \begin{equation*}\label{est_up}
\begin{gathered}
    \left\| cE_L(t) - \overline{E}(t) - e^{-c^2t} (cE_0^c - \overline{E}(0)) \right\|_{L^p(0,T;L^2)} \\
    \leq T^{\frac{1}{p}-\frac{1}{2}} \| \nabla B_L \|_{L^2(0,T;L^2)} + c^{{1-\frac{2}{p}}}\| \nabla B_L \|_{L^2(0,T;L^2)} + c^{-2} \| \partial_t \overline{E} \|_{L^p(0,T;L^2)}.
\end{gathered}
\end{equation*} By recalling \eqref{rdt_E_110} and \eqref{est_BL}, we conclude \eqref{est_EL}.
This completes the proof.
\end{proof}

\section{Proof of the main result}\label{finalproof}
 We recall the notation $\mathcal{E}^c_0$ defined in \eqref{Eczero_0214}, and we shall use the notation in this section.
\begin{equation*}
    \calE^c_0 := \|u^c_0-u_0\|_{L^2}+\|B^c_0-B_0\|_{L^2} + \frac{1}{c^2} \left(\| cE^c_0 - \overline{E}(0) \|_{H^1} +1\right).
\end{equation*}
\subsection{Error part estimates}
We assume all conditions for Theorem~\ref{thm_conv}. We recall the \textit{Error part}  $(\widetilde{u},\widetilde{E},\widetilde{B})$ in \eqref{Error_term}. Then, the error part $(\widetilde{u},\widetilde{E},\widetilde{B})$ is in $C([0,T];H^m(\mathbb{R}^3))$ and satisfy
\begin{equation}\label{Error_eqn_4}
	\begin{cases}
	    \begin{aligned}
		\partial_t \widetilde{u} + (\widetilde{u} \cdot \nabla)\widetilde{u} + \nabla \widetilde{p} &= j^c \times B^c - \overline{j} \times \overline{B} - (\overline{u} \cdot \nabla)\widetilde{u} - (\widetilde{u} \cdot \nabla)\overline{u}, &\operatorname{div} \widetilde{u}=0,\\
		\frac{1}{c}\partial_t \widetilde{E} -  \nabla \times \widetilde{B} +  c\widetilde{E} &= - \bbP (u^c \times B^c  - \overline{u} \times \overline{B}  ), &\operatorname{div} \widetilde{B}=0, \\
		\frac{1}{c}\partial_t \widetilde{B} + \nabla \times \widetilde{E} &= 0, &  \operatorname{div} \widetilde{E} = 0,\\
         \widetilde{u}\vert_{t=0}=u^c_0-u_0,\quad &\widetilde{E}\vert_{t=0}=0,\quad \widetilde{B}\vert_{t=0}= B^c_0-B_0.
	\end{aligned}
	\end{cases}
\end{equation} We provide an $L^2$-estimate for the error parts which holds uniformly in $c \in [c_0,\infty)$. This estimate, combined with Condition~\ref{cond_initial}, reveals that the error part always tend to zero as $c \to \infty$. This implies that the difference between MHD system and Euler-Maxwell system can be analyzed by solely investigating mainly the auxiliary linear system \eqref{linear_H}.

\begin{proposition}[Estimate of the error part]\label{tilde_2122}
	We assume all conditions for Theorem~\ref{thm_conv}. Let  $(\widetilde{u},\widetilde{E},\widetilde{B})$ be solution to  error part system~\eqref{Error_eqn_4}. Then, we have
\begin{equation}
\begin{aligned}\label{error_est}
\sup_{t \in [0,T]} \Big( \| \widetilde{u}(t) \|_{L^2} +  \| \widetilde{E}(t) \|_{L^2}  &+ \| \widetilde{B}(t) \|_{L^2} \Big) +c\left(\int_0^T \int_{\bbR^3} |\widetilde{E}(t) |^2 \,\ud x \,\ud t\right)^{\frac{1}{2}} \lesssim_{T,M,\overline{M}} \, \calE^c_0.
\end{aligned}
\end{equation}
\end{proposition}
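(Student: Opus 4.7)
The plan is to close an $L^2$ energy identity for $(\widetilde{u},\widetilde{E},\widetilde{B})$ and run a Gr\"onwall-type bootstrap against the linear bounds from Proposition~\ref{prop_Lin}. I would test the $\widetilde{u}$ equation with $\widetilde{u}$, the Amp\`ere-type equation for $\widetilde{E}$ with $c\widetilde{E}$, and the Faraday-type equation for $\widetilde{B}$ with $c\widetilde{B}$, then sum the three. The convection terms $(\widetilde{u}\cdot\nabla)\widetilde{u}$ and $(\overline{u}\cdot\nabla)\widetilde{u}$ vanish by incompressibility, and the Maxwell coupling terms $c\int(\nabla\times\widetilde{B})\cdot\widetilde{E}$ and $c\int(\nabla\times\widetilde{E})\cdot\widetilde{B}$ cancel after integration by parts, producing
\begin{equation*}
\frac12\frac{\ud}{\ud t}\bigl(\|\widetilde{u}\|_{L^2}^2+\|\widetilde{E}\|_{L^2}^2+\|\widetilde{B}\|_{L^2}^2\bigr)+c^2\|\widetilde{E}\|_{L^2}^2=\mathcal{R}(t),
\end{equation*}
where $\mathcal{R}$ gathers the Lorentz difference $\int(j^c\times B^c-\overline{j}\times\overline{B})\cdot\widetilde{u}$, the stretching term $-\int(\widetilde{u}\cdot\nabla)\overline{u}\cdot\widetilde{u}$, and the Maxwell source $-c\int(u^c\times B^c-\overline{u}\times\overline{B})\cdot\widetilde{E}$ (the Leray projector drops since $\widetilde{E}$ is divergence-free).

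The decisive algebraic input is to subtract Ohm's law in the two systems: since $\overline{j}=\overline{E}+\mathbb{P}(\overline{u}\times\overline{B})$ by \eqref{def_Ebar}, one obtains
\begin{equation*}
j^c-\overline{j}=(cE_L-\overline{E})+c\widetilde{E}+\mathbb{P}(u^c\times B^c-\overline{u}\times\overline{B}).
\end{equation*}
Combined with the splittings $j^c\times B^c-\overline{j}\times\overline{B}=(j^c-\overline{j})\times\overline{B}+j^c\times(\widetilde{B}+B_L)$ and $u^c\times B^c-\overline{u}\times\overline{B}=\overline{u}\times(\widetilde{B}+B_L)+\widetilde{u}\times B^c$, this lets me pair every "difference" factor ($j^c-\overline{j}$ and $u^c-\overline{u}$) with the uniformly $L^\infty_{t,x}$-bounded MHD field ($\overline{B}$ or $\overline{u}$), while the leftover pieces $j^c\times(\widetilde{B}+B_L)$ and $\widetilde{u}\times B^c$ are controlled by the integrability hypotheses $\|j^c\|_{L^1_tL^\infty_x}\le M$ and $\|B^c\|_{L^2_tL^\infty_x}^2\le M$ from \eqref{ass_eng_EM}. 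In particular the uncontrolled norm $\|u^c\|_{L^\infty}$ never enters, and all terms of the form $c\widetilde{E}\times(\text{bounded})\cdot(\widetilde{u}\text{ or }\widetilde{B})$ are absorbed into the coercive $c^2\|\widetilde{E}\|^2/2$ via Young's inequality.

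After this reduction $\mathcal{R}$ decomposes into the schema $A(t)y(t)+H(t)\sqrt{y(t)}+K(t)$, where $y:=\|\widetilde{u}\|^2+\|\widetilde{E}\|^2+\|\widetilde{B}\|^2$, the multiplier $A\in L^1([0,T])$ uniformly in $c$ (fed by $\|B^c\|_{L^\infty}^2$, $\|j^c\|_{L^\infty}$, and $L^\infty$ norms of the MHD solution), $H(t)\lesssim\|cE_L-\overline{E}\|_{L^2}+(1+\|j^c\|_{L^\infty})\|B_L\|_{L^2}$, and $K(t)\lesssim\|B_L\|_{L^2}^2$. The $L^1_tL^2_x$ estimate \eqref{EL_L1L2_estimate} for $cE_L-\overline{E}$ and the $L^\infty_tL^2_x$ bound \eqref{est_BL} for $B_L$ together yield $\|H\|_{L^1_t}\lesssim\mathcal{E}_0^c$ and $\|K\|_{L^1_t}\lesssim(\mathcal{E}_0^c)^2$; since also $\sqrt{y(0)}\le\mathcal{E}_0^c$ by the very definition of $\mathcal{E}_0^c$, a standard bootstrap on $z(t):=\sup_{s\le t}\sqrt{y(s)}$ (after subdividing $[0,T]$ into finitely many pieces on which $\int A\le 1/4$) produces $\sup_{[0,T]}\sqrt{y}\lesssim_{T,M,\overline{M}}\mathcal{E}_0^c$, and the dissipative estimate $c\|\widetilde{E}\|_{L^2_tL^2_x}\lesssim\mathcal{E}_0^c$ follows by integrating the energy identity once more.

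The main obstacle is engineering the cross-product splittings so that no uncontrolled norm is ever invoked. The naive choice of pairing $cE_L-\overline{E}$ with $B^c$ would force the use of $\|cE_L-\overline{E}\|_{L^2_tL^2_x}$, which by Proposition~\ref{prop_Lin} carries a $c^{-1}\|cE^c_0-\overline{E}(0)\|_{L^2}$ contribution that is \emph{not} dominated by $\mathcal{E}_0^c$; pairing with $\overline{B}$ instead unlocks the much sharper $L^1_tL^2_x$ control, which is of exactly the right size $\mathcal{E}_0^c$. The analogous guiding rule — always pair the linear residuals $cE_L-\overline{E}$ and $B_L$, and the difference factors $j^c-\overline{j}$ and $u^c-\overline{u}$, against the uniformly bounded MHD quantities rather than against $u^c$ or $B^c$ — governs every algebraic choice in the expansion.
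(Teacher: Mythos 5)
Your proposal is correct and follows essentially the same route as the paper: the same $L^2$ energy pairing $(\widetilde{u},c\widetilde{E},c\widetilde{B})$, the identical cross-product splittings pairing $j^c-\overline{j}$ and $u^c-\overline{u}$ with the bounded MHD fields, Young absorption of the $c\widetilde{E}$ terms into the damping, and closure via the linear bounds \eqref{EL_L1L2_estimate} and \eqref{est_BL}. The only difference is cosmetic bookkeeping in the Gr\"onwall step (your bootstrap with forcing terms $H\sqrt{y}+K$ versus the paper's trick of augmenting $\widetilde{X}$ by the constant $c^{-2}$ terms and dividing by $\widetilde{X}$), which changes nothing substantive.
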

\begin{proof}
Below, we set 
    \begin{equation*}
        \widetilde{X}(t) =\widetilde{X}_c(t):= \left( \| \widetilde{u}(t) \|_{L^2}^2 + \| \widetilde{E}(t) \|_{L^2}^2 + \| \widetilde{B}(t) \|_{L^2}^2 + \frac{1}{c^4}\|cE^c_0-\overline{E}(0)\|_{L^2}^2 + \frac{1}{c^4} \right)^{\frac{1}{2}}.
    \end{equation*}Then, we see that $\widetilde{X}(0) \simeq \calE^c_0$. We claim that it holds
    \begin{equation}\label{yc_ODE}
        \frac{\ud}{\dt}\left(\widetilde{X}^2\right) + c^2\| \widetilde{E} \|_{L^2}^2 \leq C\left( (1+\|B^c\|_{L^{\infty}}^2 + \|j^c\|_{L^{\infty}})\widetilde{X} + \|cE_L-\overline{E}\|_{L^2}\right) \widetilde{X}.
    \end{equation}
To see this, we write
\begin{align*}
    \|j^c-\overline{j}\|_{L^2} &= \|cE^c - \overline{E}\|_{L^2} + \|\mathbb{P}(u^c\times B^c - \overline{u}\times \overline{B})\|_{L^2}\\
    & \leq  \|cE_L - \overline{E}\|_{L^2} + \|c\widetilde{E}\|_{L^2} + \|\widetilde{u}\|_{L^2}\|B^c\|_{L^{\infty}} + \|\overline{u}\|_{L^{\infty}}\|B_L+\widetilde{B}\|_{L^2}.
\end{align*}
It follows from the $\widetilde{u}$-equation in \eqref{Error_eqn_4} that
\begin{align*}
    \frac {1}{2} \frac {\ud}{\ud t} \int |\widetilde{u}|^2 \,\ud x &=  \int j^c \times (B^c -\overline{B}) \cdot \widetilde{u} \,\ud x +\int (j^c - \overline{j}) \times \overline{B} \cdot \widetilde{u} \,\ud x  - \int (\widetilde{u} \cdot \nabla) \overline{u} \cdot \widetilde{u} \,\ud x.
\end{align*}
By Young's inequality, there exists a constant $C>0$ which depends only on $\overline{M}$ such that
\begin{align*}
    \bigg|\int (j^c - \overline{j}) &\times \overline{B} \cdot \widetilde{u} \,\ud x \bigg| \leq \|j^c-\overline{j}\|_{L^2}\|\overline{B}\|_{L^{\infty}}\|\widetilde{u}\|_{L^2} \\
    & \leq\left(\|cE_L - \overline{E}\|_{L^2} + (\|B^c\|_{L^{\infty}} +\|\overline{B}\|_{L^{\infty}})\|\widetilde{u}\|_{L^2}+ \|\overline{u}\|_{L^{\infty}}\|B_L+\widetilde{B}\|_{L^2}\right)\|\overline{B}\|_{L^{\infty}}\|\widetilde{u}\|_{L^2} + \frac{c^2}{4}\|\widetilde{E}\|^2_{L^2}\\
     & \leq C\left(\|cE_L - \overline{E}\|_{L^2}  + \|B_L\|_{L^2} + (1+\|B^c\|^2_{L^{\infty}})\widetilde{X}\right)\widetilde{X} + \frac{c^2}{4}\|\widetilde{E}\|^2_{L^2}.
\end{align*}
Here, we used $\|B^c\|_{L^{\infty}} \leq 1+\|B^c\|_{L^{\infty}}^2$ in the last inequality.
Moreover, there holds by \eqref{ass_eng_MHD2016} that
\begin{align*}
    \left| \int j^c \times (B^c - \overline{B}) \cdot \widetilde{u} \,\ud x \right| &\leq \|j^c\|_{L^{\infty}}(\|B_L\|_{L^2}+\|\widetilde{B}\|_{L^2})\|\widetilde{u}\|_{L^2}\leq \|j^c\|_{L^{\infty}}(\|B_L\|_{L^2}+\widetilde{X})\widetilde{X}, \\
    \left| \int (\widetilde{u} \cdot \nabla) \overline{u} \cdot \widetilde{u} \,\ud x \right|&\leq \|\nabla \overline{u}\|_{L^{\infty}}\|\widetilde{u}\|^2_{L^2} \leq \overline{M}\widetilde{X}^2. \\
\end{align*}
We combine these to deduce that
\begin{equation}\label{tilde_u_0227}
    \frac {1}{2} \frac {\ud}{\ud t} \int |\widetilde{u}|^2 \,\ud x \leq C_{\overline{M}}\left(\|cE_L - \overline{E}\|_{L^2}  + (1+\|j^c\|_{L^{\infty}})\|B_L\|_{L^2} + (1+\|B^c\|^2_{L^{\infty}}+\|j^c\|_{L^{\infty}})\widetilde{X}\right)\widetilde{X} + \frac{c^2}{4}\|\widetilde{E}\|^2_{L^2}
\end{equation}

On the other hand, it follows from the $\widetilde{E}$ and $\widetilde{B}$ equations in \eqref{Error_eqn_4} that
\begin{align*}
    &\frac {1}{2} \frac {\ud}{\ud t} \int (|\widetilde{E}|^2 + |\widetilde{B}|^2) \,\ud x +  c^2\int |\widetilde{E}|^2 \,\ud x = \int \mathbb{P}((u^c - \overline{u}) \times B^c) \cdot (c\widetilde{E}) \,\ud x +   \int \mathbb{P}(\overline{u} \times (B^c -\overline{B})) \cdot (c\widetilde{E}) \,\ud x. 
\end{align*}
It holds that
\begin{align*}
	\left|\int \mathbb{P}((u^c - \overline{u}) \times B^c) \cdot (c\widetilde{E}) \,\ud x\right| &\leq  \|\widetilde{u}\|_{L^2}\|B^c\|_{L^{\infty}}\|c\widetilde{E}\|_{L^2} 
    \leq 2\|\widetilde{u}\|_{L^2}^2\|B^c\|_{L^{\infty}}^2 + \frac{c^2}{8} \|\widetilde{E}\|_{L^2}^2\\
    & \leq 2\|B^c\|_{L^{\infty}}^2 \widetilde{X}^2 + \frac{c^2}{8} \|\widetilde{E}\|_{L^2}^2, 
    \\
    \left|\int \mathbb{P}(\overline{u} \times (B^c -\overline{B})) \cdot (c\widetilde{E}) \,\ud x\right| &\leq \|\overline{u}\|_{L^{\infty}}(\|B_L\|_{L^2}+\|\widetilde{B}\|_{L^2}) \|c\widetilde{E}\|_{L^2} \leq 2\|\overline{u}\|_{L^{\infty}}^2\left(  \|\widetilde{B}\|_{L^2}^2 + \|B_L\|_{L^2}^2 \right) + \frac{c^2}{8}\|\widetilde{E}\|_{L^2}^2\\
    & \leq 2\overline{M}^2\left(  \widetilde{X}^2 + \|B_L\|_{L^2}^2 \right) + \frac{c^2}{8}\|\widetilde{E}\|_{L^2}^2.
\end{align*}
This shows that
\begin{equation}\label{tilde_EB_0227}
    \frac {1}{2} \frac {\ud}{\ud t} \int (|\widetilde{E}|^2 + |\widetilde{B}|^2) \,\ud x +  c^2\int |\widetilde{E}|^2 \,\ud x \leq C_{\overline{M}}\left((1+\|B^c\|_{L^{\infty}}^2)\widetilde{X}^2 +\|B_L\|_{L^2}^2\right) + \frac{c^2}{4}\|\widetilde{E}\|_{L^2}^2.
\end{equation}
Note that \eqref{est_BL} shows that $\|B_L(t)\|_{L^2}\lesssim_{T,\overline{M}}\,\calE^c_0 \leq \,X(t)$, and we combine this to estimates \eqref{tilde_u_0227}-\eqref{tilde_EB_0227} to obtain the following $L^2$-energy inequality 
\begin{equation*}\label{conv_eng}
		\begin{aligned}
			\frac {\ud}{\ud t} \int ( |\widetilde{u}|^2 +|\widetilde{E}|^2 + |\widetilde{B}|^2 ) \,\ud x +  c^2\int |\widetilde{E}|^2 \,\ud x  \lesssim_{T,\overline{M}}  \left( \|cE_L-\overline{E}\|_{L^2} +(1+\|B^c\|_{L^{\infty}}^2 + \|j^c\|_{L^{\infty}})\widetilde{X}\right)\widetilde{X}.
		\end{aligned}
	\end{equation*} 
Thus, the claim \eqref{yc_ODE} is proved.
Now, dividing both sides of equation \eqref{yc_ODE} by $\widetilde{X}$, we have \begin{equation*}
        \frac{\ud}{\dt}\widetilde{X} \leq C_{T,\overline{M}}\left((1+\|B^c\|_{L^{\infty}}^2 + \|j^c\|_{L^{\infty}})\widetilde{X} + \|cE_L-\overline{E}\|_{L^2}\right).
    \end{equation*}
We set
\begin{equation*}
    f(t):= 1+\|B^c(t)\|_{L^{\infty}}^2 + \|j^c(t)\|_{L^{\infty}}
\end{equation*}
Recalling \eqref{EL_L1L2_estimate}, we use Gr\"onwall's inequality to obtain
\begin{align*}
    \sup_{t \in [0,T]} \widetilde{X}(t) &\leq C \int_0^T \left(e^{C\int_t^T f(\tau) \ud \tau}\cdot \| cE_L(t) - \overline{E}(t) \|_{L^2} \right) \ud t + e^{C\int_0^T f(t) \dt}\cdot\widetilde{X}(0) \\
    & \leq C e^{C(T+M)} \int_0^T \|cE_{L}(t) - \overline{E}(t)\|_{L^2} \, \dt + e^{C(T+M)} \widetilde{X}(0)\\ 
    &\lesssim_{T,M,\overline{M}} \,  \calE^c_0.
\end{align*}
On the other hand, one can conclude from \eqref{yc_ODE} that \begin{align*}
    c^2\int_0^T \int |\widetilde{E}(t) |^2 \,\ud x \,\ud t &\lesssim \widetilde{X}(0)^2 + (T+M) \sup_{t \in [0,T]} \widetilde{X}(t)^2 + \left(\sup_{t \in [0,T]} \widetilde{X}(t)\right) \int_0^T \| cE_L(t) - \overline{E}(t) \|_{L^2} \, \ud t\\ &\lesssim_{T,M,\overline{M}} \, (\calE^c_0)^2.
\end{align*} Therefore, \eqref{error_est} is obtained. This completes the proof.

\end{proof}
\subsection{Proof of the main results}\label{mainproof}
We conclude this paper by presenting the proofs of the main theorem and corollary.
\begin{proof}[Proof of Theorem~\ref{thm_conv}]
We show \eqref{c_rate1} first. From the solution decomposition and Proposition~\ref{tilde_2122}, we have \begin{equation*}
        \sup_{t \in [0,T]} \| u^c(t) - \overline{u}(t) \|_{L^2} \leq \sup_{t \in [0,T]} \| \widetilde{u}(t) \|_{L^2} \lesssim \, \calE^c_0,
    \end{equation*} for all $c \geq c_0$. Similarly, it follows from \eqref{est_BL} and Proposition~\ref{tilde_2122} that
    \begin{align*}
        \sup_{t \in [0,T]} \| B^c(t) - \overline{B}(t) \|_{L^2} &\leq \sup_{t \in [0,T]} \| B_L(t) \|_{L^2} + \sup_{t \in [0,T]} \| \widetilde{B}(t) \|_{L^2} \lesssim \, \calE^c_0,
    \end{align*}
for all $c \geq c_0$. Consequently, \eqref{c_rate1} is obtained.

To prove \eqref{c_rate2}, we fix $p \in [1,+\infty]$. We estimate $E^c$ first. We see that
\begin{equation*}
    \| cE^c(t) - (\overline{E}(t) + e^{-c^2t} (cE_0-\overline{E}(0))) \|_{L^2} \leq \| cE_L(t) - (\overline{E}(t) + e^{-c^2t} (cE_0-\overline{E}(0))) \|_{L^2} + \| c\widetilde{E}(t) \|_{L^2}.
\end{equation*} In the case of $p \leq 2$, we note from H\"older's inequality and \eqref{error_est} that \begin{equation*}
    \| c\widetilde{E} \|_{L^p(0,T;L^2)} \leq T^{\frac{1}{p} - \frac{1}{2}} \| c\widetilde{E} \|_{L^2(0,T;L^2)} \lesssim \, \calE^c_0.
\end{equation*} If $p \geq 2$, the interpolation inequality and \eqref{error_est} shows \begin{equation*}
    \| c\widetilde{E} \|_{L^p(0,T;L^2)} \leq \| c\widetilde{E} \|_{L^\infty(0,T;L^2)}^{1-\frac{2}{p}} \| c\widetilde{E} \|_{L^2(0,T;L^2)}^{\frac{2}{p}} \lesssim c^{1-\frac{2}{p}}\, \calE^c_0.
    \end{equation*} 
On the other hand, we recall \eqref{est_EL}
 \begin{align*}
    \| cE_L(t) - (\overline{E}(t) + e^{-c^2t} (cE^c_0-\overline{E}(0))) \|_{L^p(0,T;L^2)} &\lesssim  (1+c^{1-\frac{2}{p}})(c^{-2} \|cE_0^c - \overline{E}(0)\|_{H^1} + c^{-2}),
    \end{align*} and thus, we have
\begin{align*}
    \| cE^c(t) - &(\overline{E}(t) + e^{-c^2t} (cE^c_0-\overline{E}(0))) \|_{L^p(0,T;L^2)}\\
    & \leq  \| cE_L(t) - (\overline{E}(t) + e^{-c^2t} (cE^c_0-\overline{E}(0))) \|_{L^p(0,T;L^2)}+\|c\tilde{E}\|_{L^p(0,T;L^2)}\\
    &\lesssim (c^{1-\frac{2}{p}} + 1) \, \calE^c_0,
    \end{align*}
for all $c \geq c_0$.

To control the $j$ term, we see that
\begin{align*}
    \|u^c\times B^c - \overline{u}\times\overline{B}\|_{L^p(0,T;L^2)} &= \|(u^c-\overline{u})\times B^c + \overline{u}\times(B^c-\overline{B})\|_{L^p(0,T;L^2)} \\
    & \leq \|u^c-\overline u\|_{L^{\infty}(0,T;L^2)}\|B^c\|_{_{L^p(0,T;L^{\infty})}} + \|\overline{u}\|_{L^{\infty}(0,T;L^{\infty})}\|B^c-\overline{B}\|_{L^p(0,T:L^2)}.
\end{align*}
By the $j$ equation of \eqref{main_eqn}, it follows from \eqref{ass_eng_MHD2016}, \eqref{ass_eng_EM_2} that
\begin{align*}
    &\| j^c - (\overline{j} + e^{-c^2t} (cE^c_0-\overline{E}(0))) \|_{L^p(0,T;L^2)} \\
    &\hphantom{\qquad\qquad}\leq \| cE^c - (\overline{E} + e^{-c^2t} (cE^c_0-\overline{E}(0))) \|_{L^p(0,T;L^2)} + \|(u^c-\overline{u})\times B^c + \overline{u}\times(B^c-\overline{B})\|_{L^p(0,T;L^2)} \\
    &\hphantom{\qquad\qquad}\lesssim (c^{1-\frac{2}{p}} + 1)\, \calE^c_0.
    \end{align*} Hence, we obtain \eqref{c_rate2} and complete the proof.
\end{proof}

\begin{proof}[Proof of Corollary~\ref{cor_conv_uB}] 
We recall $\calE^c_0$ in \eqref{Eczero_0214} and note that  $\calE^c_0 \to 0$ as $c\to\infty$. 
Thus, for the convergence of $(u^c,B^c)$, we can easily obtain \eqref{limit_ub} from \eqref{c_rate1}.
For the electric field $E^c$, and for $p \in [1,\infty]$, we have
\begin{align*}
    \|E^c\|_{L^p(0,T;L^2)}\leq \frac{1}{c} &\| cE^c(t) - (\overline{E}(t) + e^{-c^2t} (cE_0-\overline{E}(0))) \|_{L^p(0,T;L^2)}\\ &+ \frac{1}{c}\|\overline{E}(t)\|_{L^p(0,T;L^2)} +  \frac{1}{c}\|e^{-c^2t}cE_0 \|_{L^p(0,T;L^2)} + \frac{1}{c}\|e^{-c^2t}\overline{E}(0)) \|_{L^p(0,T;L^2)}.
\end{align*}
It follows from \eqref{c_rate2} that
\begin{equation}\label{eq1125sat}
    \|E^c\|_{L^p(0,T;L^2)} \lesssim \left(\frac{1}{c}+\frac{1}{c^{\frac{2}{p}}}\right)\calE^c_0 + \frac{1}{c}+\frac{1}{c^{1+\frac{2}{p}}} + \frac{1}{c^{\frac{2}{p}}}\|E^c_0\|_{L^2(\mathbb{R}^3)}.
\end{equation}
This yields that \eqref{limit_E} for every finite $p \in [1,\infty)$.

In particular, for the case $p=\infty,$ the definition of $L^{\infty}_tL^2_x$-norm and \eqref{eq1125sat} lead to
\begin{equation}\label{eq2118mon}
      \|E^c_0\|_{L^2(\mathbb{R}^3)}\leq \|E^c\|_{L^{\infty}(0,T;L^2)} \lesssim \calE^c_0 + \frac{1}{c} + \|E^c_0\|_{L^2(\mathbb{R}^3)}.
\end{equation}
Clearly, this shows that $\|E^c\|_{L^{\infty}(0,T;L^2(\mathbb{R}^3))}$ converges to zero, when $\|E^c_0\|_{L^2}$ converges to $0$.
Moreover, if $\|E^c_0\|_{L^2}$ does not tend to zero, then $E^c$ does not converge any limit in $L^{\infty}_tL^2_x$. Since we have $E^c \to 0$ in the sense of $L^1_tL^2_x$, the only possible candidate of the limit of $E^c$ in $L^{\infty}_tL^2_x$ is also zero. This observation with \eqref{eq2118mon} finishes the proof.
\end{proof}

\begin{proof}[Proof of Corollary \ref{cor_conv_nonconv}] 
Since the analysis for the case of $j^c$ is similar, we only address the cases of $E^c$. 
Let $p \in [1,\infty)$. Using \eqref{c_rate2}, one can derive the estimate of $cE^c -\overline{E}$ by
\begin{equation}\label{upper_E}
\begin{aligned}
    \| cE^c - \overline{E}\|_{L^p(0,T;L^2)} &\leq \| cE^c - (\overline{E} + e^{-c^2t} (E_0^c-\overline{E}(0)) \|_{L^p(0,T;L^2)}  + \|e^{-c^2t} (cE_0^c-\overline{E}(0)) \|_{L^p(0,T;L^2)}\\ 
    &\lesssim (1+ c^{1-\frac{2}{p}})\,\calE^c_0 + c^{-\frac{2}{p}}\|cE_0^c-\overline{E}(0)\|_{L^2}.
\end{aligned}
\end{equation}
Similarly, the lower bound estimate also holds by
\begin{equation}\label{lower_E}
\begin{aligned}
    \| cE^c - \overline{E}\|_{L^p(0,T;L^2)} &\geq  \|e^{-c^2t} (cE_0^c-\overline{E}(0)) \|_{L^p(0,T;L^2)}- \| cE^c - (\overline{E} + e^{-c^2t}) (E_0^c-\overline{E}(0)) \|_{L^p(0,T;L^2)}  \\ 
    &\gtrsim   c^{-\frac{2}{p}}\|cE_0^c-\overline{E}(0)\|_{L^2} -  (1+ c^{1-\frac{2}{p}})\,\calE^c_0.
\end{aligned}
    \end{equation}
Now, we can directly deduce all results of convergence  for $cE^c$ using \eqref{upper_E}.
To demonstrate the non-convergence, observe that we have $cE^c \to \overline{E}$ in $L^1_tL^2_x$. This implies that the only candidate of the limit of $cE^c$ in $L^p_tL^2_x$ is $\overline{E}$ for all $p \in [2,\infty]$. Combining this observation to \eqref{lower_E}, the we obtain the all results of non-convergence. The proof is finished.
\end{proof}

\begin{proof}[Proof of Corollary \ref{cor_energyjump}]
Let $t \in (0,T]$ be fixed. It follows from \eqref{energyjump0305} with replacing $E_0$ to $E^c_0$ that 
\begin{equation}\label{eq1206tue}
\begin{gathered}
    \left| \frac{1}{2} \| E^c(t) \|_{L^2}^2 - \frac{1}{2} \| E^c_0 \|_{L^2}^2 + \int_0^t\| (j^c-\overline{j})(\tau)\|_{L^2}^2 \,\ud \tau \, \right| \\
    \leq \frac{1}{2} \left(\|u^c(t)\|^2_{L^2}-\|\overline{u}(t)\|_{L^2}^2\right)+ \frac{1}{2}\left(\|B^c(t)\|^2_{L^2}-\|\overline{B}(t)\|_{L^2}^2\right)   + 2\left|\int_{\mathbb{R}^3}  \overline{j} \cdot (j^c - \overline{j}) \,\ud x \right|.
\end{gathered}
\end{equation}
Here, we used
\begin{equation*}
    \int_0^t\| (j^c-\overline{j})(\tau)\|_{L^2}^2 \,\ud \tau = \int_0^t\| j^c(\tau)\|_{L^2}^2 \,\ud \tau - 2\int_0^t (j^c-\overline {j}) \cdot \overline{j} \,\ud \tau - \int_0^t\| \overline{j}(\tau)\|_{L^2}^2 \,\ud \tau.
\end{equation*}
By the convergence result \eqref{limit_ub} of $(u^c,B^c)$ in Corollary~\ref{cor_conv_uB}, the first and second terms on the right-hand side in \eqref{eq1206tue} goes to $0$ as $c \to \infty$. Note that the uniform boundedness of $\|j^c\|_{L^2(0,T;L^2)}$ reveals $j^c$ weakly converges in $L^2_tL^2_x$ and also we have shown that $j^c \to \overline{j}$ in $L^1(0,T;L^2))$ in strong sense. Thus, $j^c$ converges to $\overline{j}$ weakly in $L^2_tL^2_x$. This reveals that the last term also converges to $0$. In other words, 
\begin{equation}\label{eq1133tue}
    \lim_{c\to\infty}\left( \left| \frac{1}{2} \| E^c(t) \|_{L^2}^2 - \frac{1}{2} \| E^c_0 \|_{L^2}^2 + \int_0^t\| (j^c-\overline{j})(\tau)\|_{L^2}^2 \,\ud \tau \, \right| \right)=0.
\end{equation}
On the other hands, \eqref{c_rate3} shows that
\begin{equation*}
    \left|\| j^{c} - \overline{j} \|_{L^2(0,t;L^2)} - \|e^{-c^2t}(cE^c_0-\overline{E}(0))\|_{L^2(0,t;L^2)}\right| \leq C \calE^c_0.
\end{equation*}
For each fixed $t>0$, we see that
\begin{equation*}
    \|e^{-c^2t}(cE^c_0-\overline{E}(0))\|_{L^2(0,t;L^2)} = \frac{1}{\sqrt2}\|E^c_0\|_{L^2}+o(1).
\end{equation*}
Thus, this shows that
\begin{equation}\label{eq1223tue}
\lim_{c\to \infty} \left( \int_0^t\| (j^c -\overline{j})(\tau) \|_{L^2}^2\,\ud \tau - \frac{1}{2}\|E_0^c\|_{L^2}^2 \right) = 0.
\end{equation}
Combining with \eqref{eq1133tue} and \eqref{eq1223tue}, we conclude that
\begin{equation*}
    \lim_{c \to \infty} \|  E^c(t) \|_{L^2}^2 = 0.
\end{equation*}
We combine this and \eqref{eq1721tue} to finish the proof.
\end{proof}

\section*{Acknowledgements}
The authors would like to thank Haroune Houamed for valuable comments on the draft.
\\
J. Kim's work was supported by the National Research Foundation of Korea(NRF) grant funded by the Korea government(MSIT) (No. RS-2024-00360798). J. Lee's work was supported by the National Research Foundation of Korea (NRF) grant funded by the Korea government (MSIT) (No. NRF-2021R1A2C1092830). 

\appendix
\section{Appendix}
\subsection{The a priori estimate}

We provide the proof of Lemma~\ref{Energy_estimate_1015}.

\begin{proof}[Proof of Lemma \ref{Energy_estimate_1015}] 
Let $\alpha$ be a multi-index with $|\alpha| = m$. Then, we have
\begin{equation}\label{est_Hk_eng1}
	\begin{gathered}
		\frac {1}{2} \frac {\ud}{\ud t} \int (|\partial^{\alpha} u|^2 + |\partial^{\alpha} E|^2 + |\partial^{\alpha} B|^2) \,\ud x \\
		= - \int \partial^{\alpha} (u \cdot \nabla)u \cdot \partial^{\alpha} u \,\ud x + \int \partial^{\alpha} (j \times B) \cdot \partial^{\alpha} u \,\ud x - \int \partial^{\alpha} j \cdot \partial^{\alpha} (cE) \,\ud x.
	\end{gathered}
\end{equation} The divergence free condition implies
\begin{equation}\label{est_Hk_eng2}
	\begin{aligned}
		\left| - \int \partial^{\alpha} (u \cdot \nabla)u \cdot \partial^{\alpha} u \,\ud x \right| &\leq C \| \nabla u \|_{L^{\infty}} \| \nabla^{m} u \|_{L^2} ^2.
	\end{aligned}
\end{equation} Since the $j$ equation gives
\begin{equation*}
	\begin{aligned}
		\partial^{\alpha} (cE) = \partial^{\alpha} j  - \partial^{\alpha} \mathbb{P}(u \times B)
	\end{aligned}
\end{equation*}
the remainder integrals are equal to
\begin{equation}\label{rem_est}
    \begin{gathered}
		\int \partial^{\alpha} (j \times B) \cdot \partial^{\alpha} u \,\ud x - \int \partial^{\alpha} j \cdot \partial^{\alpha} (cE) \,\ud x  \\
        = - \int |\partial^{\alpha} j|^2 \,\ud x + \int \partial^{\alpha} j \cdot \partial^{\alpha} (u \times B) \,\ud x + \int \partial^{\alpha} (j \times B) \cdot \partial^{\alpha} u \,\ud x.
    \end{gathered}
\end{equation}
The second and third terms of \eqref{rem_est} are bounded simply by the sum of
\begin{equation}\label{est_Hk_eng3}
    \begin{gathered}
        \left| \int \partial^{\alpha} j \cdot \partial^{\alpha} (u \times B) \,\ud x  \right| \\
        \leq  C \|\nabla ^m j\|_{L^2}\left( \| \nabla^{m} u \|_{L^2} \|  B \|_{L^{\infty}} + \| u \|_{L^{\infty}} \| \nabla^m B \|_{L^2} \right) \\
        \leq \frac{1}{8 \cdot 3^m} \| \nabla^m j \|_{L^2}^2 + C \| \nabla^m u \|_{L^2}^2  \| B \|_{L^{\infty}}^2 + C \| u \|_{L^{\infty}}^2 \| \nabla^m B \|_{L^2}^2 ,
    \end{gathered}
\end{equation} and
\begin{equation}\label{est_Hk_eng4} 
	\begin{gathered}
		\left|  \int \partial^{\alpha} (j \times B) \cdot \partial^{\alpha} u \,\ud x  \right| \\
        \leq C\| \nabla^{m} u \|_{L^2} \left( \| \nabla^{m} j \|_{L^2} \|  B \|_{L^{\infty}} + \| j \|_{L^\infty} \| \nabla^m B \|_{L^2} \right) \\
        \leq \frac{1}{8 \cdot 3^m} \| \nabla^m j \|_{L^2}^2 + C\| \nabla^m u \|_{L^2}^2  \|  B \|_{L^{\infty}}^2 + C\| j \|_{L^{\infty}} \| \nabla^m u \|_{L^2} \| \nabla^m B \|_{L^2}.
	\end{gathered}
\end{equation} 
Combining \eqref{est_Hk_eng1}-\eqref{est_Hk_eng4} together with $L^2$-energy estimate, it follows that \begin{equation}\label{blow_esti1215}
\begin{gathered}
    \frac {1}{2} \frac {\ud}{\ud t} (\| u \|_{H^m}^2 + \| E\|_{H^m}^2 + \| B \|_{H^m}^2) + \frac{1}{2}\| j \|_{H^m}^2 \\
		\leq C (\| \nabla u \|_{L^{\infty}} + \| u \|_{L^{\infty}}^2 +  \|  B \|_{L^{\infty}}^2 + \| j \|_{L^{\infty}} ) (\| u \|_{H^m} ^2 + \| B \|_{H^m}^2)
    \end{gathered}
\end{equation} Applying the following estimate \begin{equation*}
    \| j \|_{L^{\infty}} (\| u \|_{H^m} ^2 + \| B \|_{H^m}^2) \leq \frac{1}{8} \| j \|_{H^m}^2 + C(\| u \|_{H^m} ^2 + \| B \|_{H^m}^2)^2,
\end{equation*}
we obtain \eqref{eng_Hm}. Moreover, we combine the estimate \eqref{blow_esti1215} with the Gr\"onwall inequality to obtain the blow-up criterion. This completes the proof.
\end{proof}

\subsection{Uniqueness of the solution}
We next provide uniqueness result omitted in Theorem~\ref{thm_ext}. 
\begin{proof}[Proof of Theorem~\ref{thm_ext} (Uniqueness part)]
Let $(u_1,B_1,E_1)$ and $(u_2, B_2, E_2)$ be two solutions of \eqref{main_eqn} with the same initial data $(u_0,B_0,E_0)$.
Then, by subtracting one equation from another, we have
\begin{equation*}
\begin{cases}
	\begin{aligned}
		\partial_t (u_1-u_2) + (u_1 \cdot \nabla)(u_1-u_2) +((u_1-u_2) \cdot \nabla)u_2  + \nabla (p_1-p_2) &= j_1 \times B_1 - j_2 \times B_2, \\
		\frac {1}{c}\, \partial_t (E_1-E_2) - \nabla \times (B_1-B_2) &= -(j_1-j_2), \\
		\frac {1}{c}\,\partial_t (B_1-B_2) + \nabla \times (E_1-E_2) &= 0, \\
	\end{aligned}
\end{cases}
\end{equation*} 
and we easily verify that
\begin{equation}\label{ineq_1103}
\begin{aligned}
\frac 12 \frac {\ud}{\ud t} &\left( \| u_1-u_2  \|_{L^2}^2  + \| E_1-E_2 \|_{L^2}^2 + \| B_1-B_2 \|_{L^2}^2 \right) + \| j_1-j_2 \|_{L^2}^2 \\
\leq &\| u_1-u_2 \|_{L^2}^2 \| \nabla u_2 \|_{L^{\infty}} + \| j_1-j_2 \|_{L^2}\| u_1-u_2 \|_{L^2} \|B_1 \|_{L^{\infty}} \\
&+ 2\| j_2 \|_{L^{\infty}} \| B_1-B_2 \|_{L^2}\| u_1-u_2 \|_{L^2} + \| j_1-j_2 \|_{L^2}\| u_2 \|_{L^\infty} \|B_1-B_2 \|_{L^2}.
\end{aligned}    
\end{equation}
We denote $Y_c(t)$ by
$$Y_c(t) = \| u_1-u_2 \|_{L^2}^2 + \| E_1-E_2 \|_{L^2}^2 + \| B_1-B_2 \|_{L^2}^2.$$ 
It follows from \eqref{ineq_1103} that
$$\frac {\ud}{\ud t} Y_c(t) \leq C Y_c(t) \left(\| \nabla u_2 \|_{L^{\infty}} + \| B_1 \|_{L^{\infty}}^2 + \| j_2 \| + \| u_2 \|_{L^{\infty}}^2 \right),$$
which implies 
$$Y_c(t) \leq Y_c(0) \exp \left( C \int_0^t \left(\| \nabla u_2 \|_{L^{\infty}} + \| B_1 \|_{L^{\infty}}^2 + \| j_2 \| + \| u_2 \|_{L^{\infty}}^2 \right) \,\ud \tau \right).$$
Combining the boundedness of the solutions with $Y_c(0)=0$, we conclude that $Y_c(t)=0$ for $t \in [0,T]$. 
\end{proof}

\subsection{Optimality of the linear estimate}

We remark that the result of Propsition~\ref{prop_Lin} implies that the convergence results for $B_L$ and $E_L$ are sharp in the following sense: We assume that $E_0^c \neq 0$. Combining \eqref{est_EL} and
\begin{equation*} \| e^{-c^2t} (cE_0^c - \overline{E}(0)) \|_{L^p(0,T;L^2)} = \| e^{-c^2t} \|_{L^p(0,T)} \| cE_0^c - \overline{E}(0) \|_{L^2}\simeq c^{-\frac{2}{p}+1},
\end{equation*}
    there holds for $1 < p \leq \infty$ that 
    \begin{equation}\label{sharp_EL}
    \left\| cE_L - \overline{E} \right\|_{L^p(0,T;L^2)} \simeq c^{-\frac{2}{p}+1}
    \end{equation} for sufficiently large $c$. Moreover, if $m \geq 4$ is assumed further, then \begin{equation*}
        \| B_L \|_{L^{\infty}(0,T;H^{m-4})} + \| \nabla B_L \|_{L^2(0,T;H^{m-4})} \gtrsim c^{-1}
    \end{equation*} holds. To see this, we observe the following:
in the case of $m > 3$, the proposition~\ref{prop_Lin} can be extended to general one by considering the $(m-3)$-th order derivatives and making corresponding adjustments, thereby yielding the analogous conclusion for higher orders. In other words, we can replace \eqref{est_BL} and \eqref{est_EL} by \begin{equation}\label{ext_BL}
\begin{gathered}
    \sup_{t \in [0,T]} \left(\int_{\mathbb{R}^3} |\nabla^{m-3} B_L(t)|^2 \,\ud x\right)^{\frac{1}{2}} + \left(\int_0^T\int_{\mathbb{R}^3} |\nabla^{m-2} B_L|^2 \,\ud x \ud t\right)^{\frac{1}{2}} \lesssim_{m,T,\overline{M}} c^{-2} \|\nabla^{m-3}( c E_0^c - \overline{E}(0))\|_{H^{1}} + c^{-2}
\end{gathered}
\end{equation} and \begin{equation}\label{ext_EL}
   \left\| \nabla^{m-3}\left( cE_L(t) - \overline{E}(t) - e^{-c^2t} (cE_0^c - \overline{E}(0)) \right)\right\|_{L^p(0,T;L^2)} \lesssim_{m,T,\overline{M}} (1+c^{1-\frac{2}{p}})(c^{-2} \|\nabla^{m-3}( c E_0^c - \overline{E}(0))\|_{H^{1}} + c^{-2}),
\end{equation} respectively.
Using this, we obtain the following proposition which shows the optimality of the estimate \eqref{diff_B_eq}.
\begin{proposition}\label{prop_sharp}
    Assume all assumptions in Proposition~\ref{prop_Lin}. Furthermore, suppose that $E_0^c \neq 0$, and $m \geq 4$. Then, \begin{equation}\label{est_low_B}
        \left(\sup_{t\in[0,T]}\int_{\mathbb{R}^3} |\nabla^{m-4} B_L|^2 \,\ud x\right)^{\frac{1}{2}} + \left(\int_0^T \int_{\mathbb{R}^3} |\nabla^{m-3} B_L|^2 \,\ud x \ud t\right)^{\frac{1}{2}} \gtrsim c^{-1}
    \end{equation} holds for $c \geq c_0$.
\end{proposition}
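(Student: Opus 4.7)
The plan is to derive the lower bound from an explicit Duhamel representation of $B_L$. Writing the $B_L$-equation of \eqref{linear_H} in the form $\partial_t B_L = -\nabla \times (cE_L - \overline{E})$ and integrating in time with $B_L(0) = 0$ yields
\begin{equation*}
    B_L(T) = -\int_0^T \nabla \times (cE_L - \overline{E})(s)\,\ud s.
\end{equation*}
Proposition~\ref{prop_Lin}, together with its higher-order extension \eqref{ext_EL}, guarantees the decomposition $(cE_L - \overline{E})(s) = e^{-c^2 s}(cE_0^c - \overline{E}(0)) + R(s)$ with $R$ controlled in $L^p_t L^2_x$. Inserting this and evaluating the exponential integral yields
\begin{equation*}
    B_L(T) = -\frac{1-e^{-c^2 T}}{c^2}\,\nabla \times (cE_0^c - \overline{E}(0)) - \int_0^T \nabla \times R(s)\,\ud s.
\end{equation*}

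Since the first summand on the left of \eqref{est_low_B} is at least $\|\nabla^{m-4} B_L(T)\|_{L^2_x}$, it suffices to obtain a $c^{-1}$ lower bound for the latter. Using the divergence-free condition to convert $\nabla^{m-4}\nabla \times$ into a gradient of order $m-3$, the main term contributes an $L^2_x$-norm of
\begin{equation*}
    \frac{1-e^{-c^2 T}}{c^2}\,\|\nabla^{m-3}(cE_0^c - \overline{E}(0))\|_{L^2_x} \simeq c^{-1},
\end{equation*}
since $c^2 T \to \infty$ makes $1 - e^{-c^2 T}$ of unit order, and since the hypothesis $E_0^c \neq 0$ (read uniformly in $c$) forces $\|\nabla^{m-3}(cE_0^c - \overline{E}(0))\|_{L^2} \gtrsim c$ for large $c$. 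The error term can be bounded by $\|\nabla^{m-3} R\|_{L^1_t L^2_x}$, which by \eqref{ext_EL} at $p = 1$ is itself of order $c^{-1}$.

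Since the main and error terms are a priori at the same scale $c^{-1}$, the main obstacle is verifying that the leading contribution strictly dominates. I plan to resolve this by pairing $\nabla^{m-4} B_L(T)$ against a unit test function $\psi$ aligned with the direction of $\nabla^{m-4}\nabla \times (cE_0^c - \overline{E}(0))$ but smoothed by an inverse-Laplacian factor so that $\|\nabla^{m-3}\psi\|_{L^2_x}$ remains of order one in $c$ rather than growing like $c$. Such a smoothing preserves the main contribution at size $\simeq c^{-1}$ while reducing the error to $\lesssim \|R\|_{L^1_t L^2_x}\|\nabla^{m-3}\psi\|_{L^2_x} = o(c^{-1})$. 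Should this delicate balancing prove awkward, a parallel route is a direct Fourier computation on \eqref{linear_H}: at each frequency $\xi$ the system diagonalizes into a $2 \times 2$ block with fast and slow modes of rates $\lambda_+ \sim c^2$ and $\lambda_- \sim |\xi|^2$, and explicit integration gives $\widehat B_L(t, \xi) \approx -c^{-1}(i\xi \times \widehat E_0^c(\xi))\,e^{-|\xi|^2 t}$ for $t \gg c^{-2}$, from which $\|\nabla^{m-4} B_L(T)\|_{L^2_x} \gtrsim c^{-1}\|e^{T\Delta}\nabla^{m-3} E_0^c\|_{L^2}$ follows by Plancherel. The same argument integrated in $t$ supplies the analogous lower bound for $\|\nabla^{m-3} B_L\|_{L^2_t L^2_x}$, so that at least one of the two summands of \eqref{est_low_B} is of size $\gtrsim c^{-1}$, completing the proof.
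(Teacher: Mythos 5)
Your own diagnosis is the crux: after the Duhamel decomposition, the leading term $\smash{\frac{1-e^{-c^2T}}{c^2}}\nabla\times(cE_0^c-\overline{E}(0))$ and the remainder $\int_0^T\nabla\times R\,\ud s$ are both of size $O(c^{-1})$, and neither of your two fixes separates them. (i) The smoothed test-function pairing cannot work in principle: pairing $\nabla^{m-4}B_L(T)$ with a unit $\psi$ and moving all derivatives onto $\psi$, the main contribution is bounded by $c^{-2}\|cE_0^c-\overline{E}(0)\|_{L^2}\|\nabla^{m-3}\psi\|_{L^2}\lesssim c^{-1}\|\nabla^{m-3}\psi\|_{L^2}$ (since $\|E_0^c\|_{L^2}\lesssim 1$ by Condition~\ref{cond_initial}), while the error is bounded by $\|R\|_{L^1(0,T;L^2)}\|\nabla^{m-3}\psi\|_{L^2}\lesssim c^{-1}\|\nabla^{m-3}\psi\|_{L^2}$ by \eqref{est_EL}/\eqref{ext_EL} with $p=1$. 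Both are controlled by the \emph{same} quantity, so any smoothing that makes the error $o(c^{-1})$ forces $\|\nabla^{m-3}\psi\|_{L^2}=o(1)$ and, by Cauchy--Schwarz, kills the main term as well. Worse, $R$ contains $c^2\int_0^t e^{-c^2(t-\tau)}\nabla\times B_L\,\ud\tau$, whose size is governed by $\|\nabla B_L\|_{L^2_tL^2_x}$ --- exactly the quantity \eqref{est_low_B} is trying to bound from below --- so no upper bound imported from Proposition~\ref{prop_Lin} can certify that the error is negligible against the main term; the argument is circular at the $c^{-1}$ scale. (ii) The Fourier route is the right heuristic, but as sketched it ignores the inhomogeneous forcing by $\overline{E}$ in \eqref{linear_H} and the oscillatory regime $|\xi|\gtrsim c$, and, more importantly, its output is a bound of the form $\|\nabla^{m-4}B_L(T)\|_{L^2}\gtrsim c^{-1}\|e^{T\Delta}\nabla^{m-3}E_0^c\|_{L^2}$. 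Your claim that $E_0^c\neq 0$ ``forces $\|\nabla^{m-3}(cE_0^c-\overline{E}(0))\|_{L^2}\gtrsim c$'' is unjustified: Condition~\ref{cond_initial} bounds $\|E_0^c\|_{H^1}$ only from above, and a uniform lower bound on $\|E_0^c\|_{L^2}$ gives no lower bound on $\|\nabla^{m-3}E_0^c\|_{L^2}$, still less on its heat-smoothed version (data concentrating at frequencies $\sim c$ make $\|e^{T\Delta}\nabla^{m-3}E_0^c\|_{L^2}$ exponentially small). So both routes leave a genuine gap.

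The paper avoids the scale collision by working at the $O(1)$ level instead of the $O(c^{-1})$ level: it tests the $B_L$-equation \eqref{diff_B_eq} with $\nabla\times(\overline{E}-cE_L)$, integrates by parts in time, and applies Young's inequality with weights $c^2/\varepsilon$ on the $B_L$ factors and $\varepsilon/c^2$ on the $(cE_L-\overline{E})$ factors; the weighted $(cE_L-\overline{E})$ terms are uniformly bounded using \eqref{ext_EL} with $p=\infty$ together with the equation \eqref{diff_E_eq} and \eqref{est_BL}, while the left-hand side $\int_0^T\|\nabla\times(\overline{E}-cE_L)\|_{L^2}^2\,\ud t$ is bounded below by a constant through the sharpness statement \eqref{sharp_EL} with $p=2$ (which is where the nondegeneracy of $E_0^c$ enters). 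Choosing $\varepsilon$ small then yields $c^2\big(\|B_L(T)\|_{L^2}^2+\|\nabla B_L\|_{L^2_tL^2_x}^2\big)\gtrsim 1$, i.e.\ \eqref{est_low_B}. The structural point you are missing is that the quantity bounded below there is $O(1)$ and the terms to be absorbed carry an explicit factor $\varepsilon$, so the proof never has to compare two competing quantities of the same order $c^{-1}$; if you want to salvage your Duhamel/Fourier route you would need an analogous mechanism (or an added uniform lower bound on $\|e^{T\Delta}\nabla^{m-3}E_0^c\|_{L^2}$ as a hypothesis).
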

\begin{proof}
For simplicity, we only prove \eqref{est_low_B} in case $m = 4$, since the other cases can be shown similarly. Let $(E_L,\,B_L) \in C([0,T];H^m(\bbR^3))$ be the solution to \eqref{linear_H}. By testing $\nabla \times (\overline{E} - cE_L)$ to the $B_L$ equation of \eqref{linear_1706}, we have \begin{equation*}
    \begin{aligned}
         \int_0^T \int |\nabla \times (\overline{E} - cE_L)|^2 \,\ud x \ud t &= \int_0^T \int \partial_t B_L \cdot \nabla \times (\overline{E} - cE_L) \,\ud x \ud t \\
         &= \int B_L(T) \cdot \nabla \times (\overline{E} - cE_L)(T) \,\ud x - \int_0^T\int B_L \cdot \nabla \times \partial_t (\overline{E} - cE_L) \,\ud x \ud t.
    \end{aligned}
\end{equation*}
Let us consider a small constant $\varepsilon>0$ which will be specified later. We have from Young's inequality
\begin{equation*}
    \left| \int B_L(T) \cdot \nabla \times (\overline{E} - cE_L)(T) \,\ud x \right| \leq \frac{c^2}{\varepsilon} \int |B_L(T)|^2 \,\ud x + \frac{\varepsilon}{4c^2} \int |\nabla \times (\overline{E} - cE_L)(T)|^2 \,\ud x,
\end{equation*} and similarly, \begin{equation*}
    \left| \int_0^T\int B_L \cdot \nabla \times \partial_t (\overline{E} - cE_L) \,\ud x \ud t \right| \leq \frac{c^2}{\varepsilon} \int_0^T \int |\nabla B_L|^2 \,\ud x \ud t + \frac{\varepsilon}{4c^2} \int_0^T \int |\partial_t (\overline{E} - cE_L)|^2 \,\ud x \ud t.
\end{equation*} Thus, it follows
\begin{equation*}
\begin{aligned}
         \int_0^T \int |\nabla \times (\overline{E} - cE_L)|^2 \,\ud x \ud t  &\leq \frac{c^2}{\varepsilon}\left( \int |B_L(T)|^2 \,\ud x + \int_0^T \int |\nabla B_L|^2 \,\ud x \ud t \right)\\ &\quad + \frac{\varepsilon}{4c^2} \left( \int |\nabla \times (\overline{E} - cE_L)(T)|^2 \,\ud x + \int_0^T \int | \partial_t (\overline{E} - cE_L)|^2 \,\ud x \ud t \right).
\end{aligned}
\end{equation*} 
We recall from \eqref{ext_EL} with $p = \infty$ that
\begin{equation*}
    \begin{aligned}
        \frac{1}{c^2} \int |\nabla \times (\overline{E} - cE_L)(T)|^2 \,\ud x \leq C.
    \end{aligned}
\end{equation*}
for some $C>0$. One can easily have from \eqref{diff_E_eq} that \begin{gather*}
    \frac{1}{c^2} \int_0^T \int |\partial_t (\overline{E} - cE_L)|^2 \,\ud x \ud t \\
    \leq 2 \left( \int_0^T \int |\nabla B_L|^2 \,\ud x \ud t + \int_0^T \int |cE_L - \overline{E}|^2 \,\ud x \ud t + \frac{1}{c^2} \int_0^T \int |\partial_t \overline{E}|^2 \,\ud x \ud t \right).
\end{gather*} Combining with \eqref{est_BL}, \eqref{sharp_EL}, and \eqref{rdt_E}, we deduce that \begin{equation*}
    \frac{1}{c^2} \int_0^T \int |\partial_t (\overline{E} - cE_L)|^2 \,\ud x \ud t \leq C
\end{equation*} for some $C > 0$. From the above results, we obtain 
\begin{equation*}
         \int_0^T \int |\nabla \times (\overline{E} - cE_L)|^2 \,\ud x \ud t  \leq \frac{c^2}{\varepsilon}\left( \int |B_L(T)|^2 \,\ud x + \int_0^T \int |B_L|^2 \,\ud x \ud t \right) + \varepsilon C.
\end{equation*} Since \eqref{sharp_EL} with $p =2$ implies \begin{equation*}
    \int_0^T \int |\nabla \times (\overline{E} - cE_L)|^2 \,\ud x \ud t \geq \tilde{C},
\end{equation*}
for some $\tilde{C}>0$, it follows that
\begin{equation*}
         \tilde{C} \leq \frac{c^2}{\varepsilon}\left( \int |B_L(T)|^2 \,\ud x + \int_0^T \int |B_L|^2 \,\ud x \ud t \right) + \varepsilon C.
\end{equation*}
Note that the constants $C$ and $\tilde{C}$ do not depend on $c \geq c_0$. By taking $\varepsilon>0$ small so that $\varepsilon \leq \frac{\tilde{C}}{2C}$, we conclude \eqref{est_low_B}.
This completes the proof.
\end{proof}



\begin{thebibliography}{m}

\bibitem{arsenio2020solutions}
D. Ars\'enio and I. Gallagher,
\textit{Solutions of Navier--Stokes--Maxwell systems in large energy spaces},
Trans. Amer. Math. Soc. \textbf{373} (2020), no. 6, 3853--3884.

\bibitem{arsenio2024stability}
D. Ars\'enio and H. Houamed,
\textit{Stability analysis of two-dimensional ideal flows with applications to viscous fluids and plasmas},
Int. Math. Res. Not. (2024), no. 8, 7032--7059.

\bibitem{arsenio2015derivation}
D. Ars\'enio, S. Ibrahim, and N. Masmoudi,
\textit{A derivation of the magnetohydrodynamic system from Navier--Stokes--Maxwell systems},
Arch. Ration. Mech. Anal. \textbf{216} (2015), no. 3, 767--812.



\bibitem{germain2014well}
P. Germain, S. Ibrahim, and N. Masmoudi,
\textit{Well-posedness of the Navier--Stokes--Maxwell equations},
Proc. Roy. Soc. Edinburgh Sect. A \textbf{144} (2014), no. 1, 71--86.

\bibitem{ibrahim2011global}
S. Ibrahim and S. Keraani,
\textit{Global small solutions for the Navier--Stokes--Maxwell system},
SIAM J. Math. Anal. \textbf{43} (2011), no. 5, 2275--2295.

\bibitem{ibrahim2018time}
S. Ibrahim, P.-G. Lemarié-Rieusset, and N. Masmoudi,
\textit{Time-Periodic Forcing and Asymptotic Stability for the Navier--Stokes--Maxwell Equations},
Commun. Pure Appl. Math. \textbf{71} (2018), no. 1, 51--89.

\bibitem{ibrahim2012local}
S. Ibrahim and T. Yoneda,
\textit{Local solvability and loss of smoothness of the Navier--Stokes--Maxwell equations with large initial data},
J. Math. Anal. Appl. \textbf{396} (2012), no. 2, 555--561.

\bibitem{masmoudi2010global}
N. Masmoudi,
\textit{Global well posedness for the Maxwell--Navier--Stokes system in 2D},
J. Math. Pures Appl. \textbf{93} (2010), no. 6, 559--571.



\bibitem{duvaut1972inequations}
G. Duvaut and J.-L. Lions,
\textit{Inéquations en thermoélasticité et magnétodynamique},
Arch. Ration. Mech. Anal. \textbf{46} (1972), 241--279.

\bibitem{sermange1983some}
M. Sermange and R. Temam,
\textit{Some mathematical questions related to the MHD equations},
Commun. Pure Appl. Math. \textbf{36} (1983), 635--664.

\bibitem{nunez2005existence}
M N{\'u}{\~n}ez, 
\textit{Existence theorems for two-fluid magnetohydrodynamics},
J. Math. Phys. \textbf{46} (2005), no. 8.





\bibitem{kozono1989weak}
H. Kozono,
\textit{Weak and classical solutions of the two-dimensional magnetohydrodynamic equations},
Tohoku Math. J. \textbf{41} (1989), no. 3, 471--488.

\bibitem{chae2014well}
D. Chae, P. Degond, and J.-G. Liu,
\textit{Well-posedness for Hall-magnetohydrodynamics},
Ann. Inst. H. Poincaré Anal. Non Lin\'eaire \textbf{31} (2014), no. 3, 555--565.


\bibitem{arsenio2019vlasov}
D. Ars\'enio and L. Saint-Raymond,
\textit{From the Vlasov--Maxwell--Boltzmann system to incompressible viscous electro-magneto-hydrodynamics},
European Mathematical Society Publishing House, 2019.


\bibitem{10.21099/tkbjm/1496160397}
S. Kawashima and Y. Shizuta,
\textit{Magnetohydrodynamic approximation of the complete equations for an electromagnetic fluid},
Tsukuba J. Math. \textbf{10} (1986), no. 1, 131--149.

\bibitem{10.3792/pjaa.62.181}
S. Kawashima and Y. Shizuta,
\textit{Magnetohydrodynamic approximation of the complete equations for an electromagnetic fluid, II},
Proc. Jpn. Acad. Ser. A Math. Sci. \textbf{62} (1986), no. 5, 181--184.

\bibitem{majda2001vorticity}
A. J. Majda and A. L. Bertozzi,
\textit{Vorticity and Incompressible Flow},
Cambridge University Press, 2001.

\bibitem{arsenio2024global}
D. Ars\'enio, H. Houamed, and B. Said-Houari,
\textit{Global unique solutions to the planar inhomogeneous Navier--Stokes--Maxwell equations},
\texttt{arXiv:2403.18500}.

\bibitem{arsenio2024axisymmetric}
D. Ars\'enio, Z. Hassainia, and H. Houamed,
\textit{Axisymmetric Incompressible Viscous Plasmas: Global Well-Posedness and Asymptotics},
Forum Math. Sigma \textbf{12} (2024), e79.

\bibitem{arsenio2022damped}
D. Ars\'enio and H. Houamed,
\textit{Damped Strichartz estimates and the incompressible Euler--Maxwell system},
Analysis \& PDE, to appear, \texttt{arXiv:2204.04277}.



\end{thebibliography}
\end{document}